\tikzset{join/.code=\tikzset{after node path={%
\ifx\tikzchainprevious\pgfutil@empty\else(\tikzchainprevious)%
edge[every join]#1(\tikzchaincurrent)\fi}}}
\tikzset{>=stealth',every on chain/.append style={join},
         every join/.style={->}}
\tikzstyle{labeled}=[execute at begin node=$\scriptstyle,
\author[I.~Kapovich]{Ilya Kapovich}
\address{\tt Department of Mathematics, University of Illinois at
  Urbana-Champaign, 1409 West Green Street, Urbana, IL 61801, U.S.A.
  \newline http://www.math.uiuc.edu/\~{}kapovich/} \email{\tt kapovich@math.uiuc.edu}
\author[K.~Rafi]{Kasra Rafi}
\address{\tt Department of Mathematics, University of Oklahoma,
Norman, OK 73019, U.S.A.
  \newline http://www.math.ou.edu/\~{}rafi/}
\email{\tt rafi@math.ou.edu}
\title{On hyperbolicity of free splitting and free factor complexes}
\newtheorem{thm}{Theorem}[section] 
\newtheorem{cor}[thm]{Corollary} 
\newtheorem{prop}[thm]{Proposition} \theoremstyle{definition}
\newtheorem{defn}[thm]{Definition}
\newtheorem{conv}[thm]{Convention} \newtheorem{rem}[thm]{Remark}
\def\strutdepth{\dp\strutbox}
\def \ss{\strut\vadjust{\kern-\strutdepth \sss}}
\def \sss{\vtop to \strutdepth{
\baselineskip\strutdepth\vss\llap{$\diamondsuit\;\;$}\null}}
\def\strutdepth{\dp\strutbox}
\def \sst{\strut\vadjust{\kern-\strutdepth \ssss}}
\def \ssss{\vtop to \strutdepth{
\baselineskip\strutdepth\vss\llap{$\spadesuit\;\;$}\null}}
\def\strutdepth{\dp\strutbox}
\def \ssh{\strut\vadjust{\kern-\strutdepth \sssh}}
\def \sssh{\vtop to \strutdepth{
\baselineskip\strutdepth\vss\llap{$\heartsuit\;\;$}\null}}
\newcommand{\cal}{\mathcal}
\newcommand{\CG}{{\cal G}}
\def\epsilon{\varepsilon}
\def\phi{\varphi}
\newcommand{\Out}{\mbox{Out}}
\newcommand{\from}{\colon\,}
\newcommand{\cvn}{\mbox{cv}_N}
\DeclareMathOperator{\diam}{diam}
\DeclareMathOperator{\Core}{Core}
\DeclareMathOperator{\degree}{deg}
\begin{document}

\begin{abstract}
We show how to derive hyperbolicity of the free factor complex of $F_N$ from
the Handel-Mosher proof of hyperbolicity of the free splitting complex of $F_N$, 
thus obtaining an alternative proof of a theorem of Bestvina-Feighn. We also show that under the natural map $\tau$ from the free splitting complex to free factor complex, a geodesic $[x,y]$ maps to a path that is uniformly Hausdorff-close to a geodesic $[\tau(x),\tau(y)]$ . 
\end{abstract}

\thanks{The first author was partially supported by the NSF grant DMS-0904200. The second author was partially supported by the NSF grant DMS-1007811.}

\subjclass[2000]{Primary 20F, Secondary 57M, 30F}

\maketitle

\section{Introduction}

The notion of a \emph{curve complex}, introduced by Harvey~\cite{Har}
in late 1970s, plays a key role in the study of hyperbolic surfaces,
mapping class group and the Teichm\"uller space.

If $S$ is a compact connected oriented surface, the \emph{curve
  complex} $\mathcal C(S)$ of $S$ is a simplicial complex whose
vertices are isotopy classes of essential non-peripheral simple closed
curves. A collection $[\alpha_0],\dots, [\alpha_n]$ of $(n+1)$ distinct
vertices of $\mathcal C(S)$  spans an $n$--simplex in $\mathcal C(S)$
if there exist representatives $\alpha_0,\dots, \alpha_n$ of these
isotopy classes such that for all $i\ne j$ the curves $\alpha_i$ and
$\alpha_j$ are disjoint. (The definition of $\mathcal C(S)$ is a little different for several surfaces of small genus). The complex $\mathcal C(S)$ is
finite-dimensional but not locally finite, and it comes equipped with
a natural action of the mapping class group $Mod(S)$ by simplicial
automorphisms. It turns out that the geometry of $\mathcal C(S)$ is
closely related to the geometry of the Teichm\"uller space $\mathcal
T(S)$ and also of the mapping class group itself. The curve complex is
a basic tool in modern Teichm\"uller theory, and has also found numerous
applications in the study of 3-manifolds and of Kleinian groups.
A key general result of Masur and Minsky~\cite{MM99} says that the curve complex $\mathcal C(S)$, equipped with
the simplicial metric, is a Gromov-hyperbolic space. Hyperbolicity of
the curve complex was an important ingredient in the solution by
Masur, Minsky, Brock and Canary  of the Ending Lamination
Conjecture~\cite{MM00,M10,BCM} (see \cite{M03} for detailed background discussion about this solution).

The outer automorphism group $\Out(F_N)$ of a free group $F_N$ is a
cousin of the mapping class group. However the
group $\Out(F_N)$ is much less well understood and, in general, more
difficult to study than the mapping class group. A free group analog of
the Teichmuller space is the Culler-Vogtmann Outer space $\cvn$,
introduced by Culler and Vogtmann in~\cite{CV}. The points of $\cvn$
are free minimal discrete isometric actions of $F_N$ on $\mathbb
R$--trees, considered up to $F_N$--equivariant isometry. The Outer space
comes equipped with a natural action of $\Out(F_N)$. It is known that
$\cvn$ is finite-dimensional and contractible; as a result, quite a
bit is known about homotopy properties of $\Out(F_N)$. 
However, the geometry of $\cvn$ and of $\Out(F_N)$ proved to be much
more difficult to tackle, particularly because $\cvn$ lacks the
various useful analytic and geometric structures present in the
Teichm\"uller space case. Another problem is that many geometric
dualities from the world of Riemann surfaces and their homeomorphisms
break down for automorphisms of free groups.

In the case of a compact connected oriented surface $S$, an essential
non-peripheral simple closed curve $\alpha$ on $S$ can be viewed in
several other ways. Thus one can view $[\alpha]$ as a conjugacy
class in the fundamental group $\pi_1(S)$. We may also think of
$\alpha$ as corresponding to the (possibly disconnected) subsurface
$K_\alpha$ of
$S$ obtained by cutting $S$ open along $\alpha$. Third, $\alpha$
determines a splitting of $\pi_1(S)$ as an amalgamated product or an
HNN-extension (depending on whether the curve $\alpha$ is separating
or non-separating) over the infinite cyclic subgroup $\langle \alpha
\rangle$. We can interpret adjacency of vertices in $\mathcal C(S)$ using
each of these points of views - or a combination of them, providing several 
essentially equivalent
descriptions of the curve complex. Thus two distinct vertices
$[\alpha], [\beta]$ of $\mathcal C(S)$ are adjacent if and only if
$\alpha$ is conjugate in $\pi_1(S)$ to an element of a vertex group of
the cyclic splitting of $\pi_1(S)$ corresponding to
$\beta$. Equivalently, $[\alpha]$ and $[\beta]$ of $\mathcal C(S)$ are
adjacent if and only if the cyclic splittings of $\pi_1(S)$
corresponding to $[\alpha]$ and $[\beta]$ admit a common refinement,
that is, a splitting of $\pi_1(S)$ as the fundamental group of a graph
of groups with two edges and cyclic edge groups, such that collapsing
one of the edges produces a splitting corresponding to  $[\alpha]$ and
collapsing the other edge produces a splitting corresponding to
$[\beta]$. Also, $[\alpha]$ and $[\beta]$ are adjacent in $\mathcal
C(S)$ if and only if there are connected components $K_\alpha'$ of
$K_\alpha$ and $K_\beta'$ of $K_\beta$ such that $K_\alpha'\subseteq
K_\beta'$ or $K_\beta'\subseteq K_\alpha'$.

In the case of $F_N$ these different points of view produce several
possible analogs of the notion of a curve complex that are no longer
essentially equivalent. 
The first of these is the \emph{free splitting complex} $FS_N$. The
vertices of $FS_N$ are nontrivial splittings of the type $F_N=\pi_1(\mathbb A)$
where $\mathbb A$ is a graph of groups with a single edge (possibly a
loop edge) and the trivial edge group; two such splittings are
considered to be the same if their Bass-Serre covering trees are
$F_N$--equivariantly isometric. Two distinct vertices $\mathbb A$ and
$\mathbb B$ of $FS_N$ are joined by an edge if these splittings admit
a common refinement, that is, a splitting $F_N=\pi_1(\mathbb D)$ where
$\mathbb D$ is a graph of groups with two edges and trivial edge
groups, such that collapsing one edge gives the splitting $\mathbb A$
and collapsing the other edge produces the splitting $\mathbb
B$. Higher-dimensional simplices are defined in a similar way, see
Definition~\ref{defn:FS_N} below for a careful formulation.
For example, if $F_N=A\ast B\ast C$, where $A,B,C$ are nontrivial, then the
splittings $F_N=(A\ast B)\ast C$ and $F_N=A\ast (B\ast C)$ are
adjacent in $FS_N$. There is a natural action of $\Out(F_N)$ on $FS_N$
by simplicial automorphisms. 
The above definition of $FS_N$ has a variation~\cite{SabSav}, called the
\emph{edge-splitting complex}, denoted $ES_N$, where in the definition
of vertices only splittings $\mathbb A$ with a single \emph{non-loop}
edge are allowed.

A rather different free group analog of the curve complex is the
\emph{free factor complex} $FF_N$, originally introduced by Hatcher
and Vogtmann~\cite{HV}. The vertices of $FF_N$ are conjugacy classes
$[A]$ of proper free factors $A$ of $F_N$. Two distinct vertices $[A],
[B]$ are joined by an edge in $FF_N$ if there exist representatives
$A$ of $[A]$ and $B$ of $[B]$ such that $A\le B$ or $B\le
A$. Higher-dimensional simplices are defined similarly,
see~Definition~\ref{defn:FF_N} below. Note that this definition does
not work well for $N=2$ as it produces a graph consisting of isolated
vertices corresponding to conjugacy classes of primitive elements in
$F_2$. However, there is a natural modification of the definition of
$FF_N$ for $N=2$ (see~\cite{BF11}) such that $FF_2$ becomes the
standard Farey graph (and in particular $FF_2$ is hyperbolic).

A closely related object to $FF_N$ is the \emph{simplicial intersection graph}
$I_N$. The graph $I_N$ is a bipartite graph with two types of
vertices: single-edge free splittings $F_N=\pi_1(\mathbb A)$ (that is, vertices of
$FS_N$) and conjugacy classes of \emph{simple} elements of $F_N$. Here
an element $a\in F_N$ is \emph{simple} if $a$ belongs to some proper
free factor of $F_N$. A free splitting $\mathbb A$ and a conjugacy
class $[a]$ of a simple element $a$ are adjacent if $a$ is conjugate
to an element of a vertex group of $\mathbb A$. The graph is a subgraph of a more general ``intersection graph" defined in \cite{KL09}.

Both $FF_N$ and $I_N$ admit natural $\Out(F_N)$--actions. It is also not hard to check that for $N\ge 3$
the graph $I_N$ is quasi-isometric to the free factor complex
$FF_N$. By contrast, the free factor complex $FF_N$ and the free
splitting complex $FS_N$ are rather different objects
geometrically. By construction, the vertex set $V(FS_N)$ is a $1$--dense subset of
$V(I_N)$. Also, the inclusion map $\iota\from (V(FS_N), d_{FS_N})\to (I_N,
d_{I_N})$ is $2$--Lipschitz. However  the distance between two free splittings in $I_N$ is generally much smaller
than the distance between them in $FS_N$. Intuitively, it is ``much
easier'' for $\mathbb A$ and $\mathbb B$ to share a common elliptic
simple element (which would make $d_{I_N}(\mathbb A, \mathbb B)\le 2$)
then for these splittings to admit a common refinement.

Until recently, basically nothing was known about the geometry of the
above complexes. Several years ago Kapovich-Lustig~\cite{KL09} and
Behrstock-Bestvina-Clay~\cite{BBC} showed that for $N\ge 3$ the
(quasi-isometric) complexes $FF_N$ and $I_N$ have infinite diameter.
Since the inclusion map $\iota$ above is Lipschitz, this implies that
$FS_N$ has infinite diameter as well. A subsequent result of
Bestvina-Feighn~\cite{BF10} implies that every fully irreducible
element $\phi\in \Out(F_N)$ acts on $FF_N$ with positive asymptotic
translation length (hence the same is true for the action of $\phi$ on
$FS_N$). It is easy to see from the definitions that if  $\phi\in
\Out(F_N)$ is not fully irreducible then some positive power of $\phi$ fixes a
vertex of $FF_N$, so that $\phi$ acts on $FF_N$ with bounded orbits.

Sabalka and Savchuk proved~\cite{SabSav} in 2010 that the edge-splitting complex
$ES_N$ is not Gromov-hyperbolic, because it possesses some
quasi-flats. Aramayona and Souto~\cite{AS} showed that every automorphism of $FS_N$ is induced by some element of $\Out(F_N)$.

Last year (2011), two significant further advances occurred. First,
Bestvina and Feighn~\cite{BF11} proved that for $N\ge 2$ the free
splitting complex is Gromov-hyperbolic (as noted above, for $N=2$ this
essentially follows from the definition of $FF_2$, so the main case of
the Bestvina-Feighn result is for $N\ge 3$). Then Handel and
Mosher~\cite{HM} proved that for all $N\ge 2$ the free splitting complex
$FS_N$ is also Gromov-hyperbolic. The two proofs are rather different
in nature, although both are quite complicated. Recently Hilion and Horbez~\cite{HH} produced another proof of hyperbolicity of $FS_N$, using ``surgery paths" in the sphere complex model of $FS_N$.  Bestvina and Reynolds~\cite{BR12} and Hamenstandt~\cite{Ha12} gave a description of the hyperbolic boundary of $FF_N$. Also, Bestvina and Feighn~\cite{BF12} and then Sabalka and Savchuk~\cite{SS12} investigated analogs of subsurface projections in the $FS_N$ and $FF_N$ contexs.

In the present paper we show how to derive hyperbolicity of the free
factor complex from the Handel-Mosher proof of hyperbolicity of the
free splitting complex. This gives a new proof of the
Bestvina-Feighn result~\cite{BF11} about hyperbolicity of $FF_N$.

There is a natural ``almost canonical'' Lipschitz projection from the free splitting complex to a free factor complex.
Namely, for any free splitting $v=\mathbb A\in V(FS_N)$ choose a vertex $u$ of $\mathbb A$ and put $\tau(v)\from=[A_u]$, where $A_u$ is the vertex group of $u$ in $\mathbb A$. This defines a map (easily seen to be Lipschitz) $\tau\from V(FS_N)\to V(FF_N)$. Extend this map to a graph-map $\tau\from FS_N^{(1)}\to FF_N^{(1)}$
by sending every edge in $FS_N$ to a geodesic in $FF_N^{(1)}$ joining the $\tau$--images of the endpoints of that edge.
Although the map $\tau$ is not quite canonically defined (since it involves choosing a vertex group in a free splitting $\mathbb A$ when defining $\tau(\mathbb A)$), it is easy to check that, for $N\ge 3$, if $\tau'\from  V(FS_N)\to V(FF_N)$ is another map constructed by the above procedure, then $d(\tau(v), \tau'(v))\le 2$ for all $v\in V(FS_N)$.  

We prove:

\begin{thm}\label{thm:A}
Let $N\ge 3$. Then the free factor complex $FF_N$ is Gromov-hyperbolic. Moreover, there exists a constant $C>0$ such that for any two vertices $x,y$ of $FS_N$ and any geodesic $[x,y]$ in $FS_N^{(1)}$ the path $\tau([x,y])$ is $C$--Hausdorff close to a geodesic $[\tau(x),\tau(y)]$ in $FF_N^{(1)}$.
\end{thm}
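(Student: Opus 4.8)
The plan is to transport the Gromov-hyperbolicity of $FS_N$ (the Handel--Mosher theorem, which we take as given) across the Lipschitz projection $\tau$ and simultaneously verify that $\tau$ sends $FS_N$-geodesics to quasigeodesics in $FF_N$. The key structural fact I would extract from the Handel--Mosher machinery is that $FS_N$ admits a sufficiently rich supply of geodesics with controlled combinatorics — concretely, one wants the analogue of a ``surviving'' or ``folding'' family of paths: for any two free splittings $x,y\in V(FS_N)$ there is a geodesic $[x,y]$ along which, after passing to the Bass--Serre trees, one can track a collapse/fold sequence. I would then aim to show two things about $\tau$: (1) $\tau$ is coarsely Lipschitz (already noted in the excerpt, essentially immediate from the definitions, since refining a splitting only enlarges the class of elliptic free factors by a bounded amount), and (2) $\tau$ is \emph{coarsely surjective} and does not contract distances too drastically along these special geodesics. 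For (2) the point is that if $\mathbb A,\mathbb B$ are far apart in $FS_N$ but $\tau(\mathbb A),\tau(\mathbb B)$ are close in $FF_N$, then $A_u$ and $B_w$ share a common proper free factor up to conjugacy, and one must argue — using the Handel--Mosher distance estimate in $FS_N$ — that this forces $\mathbb A,\mathbb B$ to be close after all. This is where the input from the proof (not just the statement) of hyperbolicity of $FS_N$ is needed.

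The cleanest route to Theorem~\ref{thm:A} is via a criterion for ``pushing forward'' hyperbolicity along a surjective map that sends geodesics to uniform quasigeodesics. Precisely: suppose $Y$ is a hyperbolic geodesic space, $Z$ is a geodesic space, $f\from Y\to Z$ is coarsely Lipschitz and coarsely surjective, and there is a constant so that for every geodesic segment $[a,b]$ in $Y$ the image $f([a,b])$ is an (unparametrized) quasigeodesic in $Z$ with uniform constants; then $Z$ is hyperbolic and $f([a,b])$ is uniformly Hausdorff-close to a geodesic $[f(a),f(b)]$. The proof of this criterion is a thin-triangles argument: given three points in $Z$, pull them back to $Y$, take a geodesic triangle there, apply $f$ to its three sides (getting three uniform quasigeodesics in $Z$), use $\delta$-thinness in $Y$ plus coarse Lipschitzness of $f$ to conclude the three $f$-images form a thin triangle in $Z$, then invoke the Morse lemma \emph{once it is available} — and here one has to be a little careful, since the Morse lemma in $Z$ is what we are trying to establish. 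The standard fix is to prove a ``local-to-global'' or bootstrapping version: quasigeodesic images that form thin triangles suffice to run the Bestvina--Feighn / Masur--Minsky style argument showing $Z$ is hyperbolic and that these quasigeodesics are then genuinely Morse.

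Accordingly, the steps are: (i) state and prove the push-forward criterion above; (ii) verify $\tau$ is coarsely Lipschitz and coarsely surjective (the latter because every proper free factor $A$ is a vertex group of some one-edge free splitting, e.g.\ $F_N=A\ast B$ when $A$ is a free factor, or an appropriate HNN splitting); (iii) — the crux — show that for a suitable family of geodesics in $FS_N$ (the ones produced by the Handel--Mosher argument), $\tau$ restricted to such a geodesic is a uniform quasigeodesic in $FF_N$, equivalently that $\tau$ does not backtrack and makes definite progress; then (iv) combine (i)--(iii). I expect step (iii) to be the main obstacle: it requires opening up the Handel--Mosher proof, identifying the combinatorial ``progress'' quantity they control along $FS_N$-geodesics, and showing that this quantity also controls, from below, the $FF_N$-distance traversed. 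The natural tool is the relationship between the free splitting complex and the intersection graph $I_N$ (quasi-isometric to $FF_N$ for $N\ge 3$): a free splitting $\mathbb A$ maps in $I_N$ to the set of simple elliptic elements, and one shows that along a Handel--Mosher geodesic the elliptic simple-element sets of consecutive vertices have controlled overlap from above and from below, so that the $I_N$-image is a quasigeodesic. The lower bound on progress — ruling out that $\tau$ makes a long excursion and returns — is the delicate part and is precisely what forces us to use the structure, rather than merely the conclusion, of the Handel--Mosher argument.
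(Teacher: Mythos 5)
Your overall architecture --- push hyperbolicity forward from $FS_N$ along a Lipschitz surjection, using the structure of the Handel--Mosher folding paths --- matches the paper's, but your crux step (iii) asks for strictly more than is needed, and the part you correctly flag as delicate (the lower bound on progress, ruling out a long excursion that returns) is precisely what the paper's argument avoids having to prove. The paper does not take ``$\tau$ sends geodesics to uniform unparametrized quasigeodesics'' as a hypothesis; that route does run into the circularity you mention with the Morse lemma in the target, and your proposed ``local-to-global bootstrapping'' fix is left unsupplied. Instead the paper uses Bowditch's thin-triangles criterion (Proposition~\ref{prop:bow}) to prove a push-forward statement (Corollary~\ref{cor:contraction}, refined in Proposition~\ref{prop:image}): if $X$ is hyperbolic and $f\from X\to Y$ is a Lipschitz surjective graph-map such that \emph{whenever the endpoints already map close together} (say $d_Y(f(x),f(y))\le 1$ for $x,y$ in a dense subset $S$) the image $f([x,y])$ has uniformly bounded diameter, then $Y$ is hyperbolic and images of geodesics are Hausdorff-close to geodesics. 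The hypothesis is a pure upper bound, checked only for geodesics whose endpoints are essentially identified by $f$; no lower bound on progress along general geodesics is required, and the quasigeodesity of $\tau([x,y])$ comes out as a conclusion rather than going in as an input.

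The second missing ingredient is the reduction that makes even this weak hypothesis checkable. Rather than working with $\tau\from FS_N\to FF_N$ or with $I_N$ directly, the paper introduces the free bases graph $FB_N$ (vertices: free bases of $F_N$ up to equivalence; edges: sharing a common basis element), proves it quasi-isometric to $FF_N$ (Proposition~\ref{prop:FB}), and builds $f\from FS_N'\to FB_N$ restricting to the identity on the set $S$ of roses. For $x,y\in S$ with $d_{FB_N}(f(x),f(y))\le 1$, the two bases $\mathcal A,\mathcal B$ share an element $a_1=b_1$, and the Handel--Mosher path from $x$ to $y$ degenerates to an ordinary Stallings folding sequence from a wedge of loops spelling out $\mathcal B$ down to the rose $R_{\mathcal A}$; every intermediate graph retains a loop-edge labelled $a_1$ at its basepoint, so every intermediate vertex stays within bounded $FB_N$-distance of $[\mathcal A]$. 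That single observation is the entire verification. Without some such reduction, your plan to control the ``overlap of elliptic simple-element sets from above and from below'' along general Handel--Mosher paths in $I_N$ leaves the hard half of your argument unproved.
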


To prove Theorem~\ref{thm:A}, we first introduce a new object, called the \emph{free bases graph},
and denoted $FB_N$, see Definition~\ref{defn:FB_N} below. The vertices
of $FB_N$ are free bases of $F_N$, up to some natural
equivalence. Informally, adjacency in $FB_N$ corresponds to two free
bases sharing a common element. We then prove
(Proposition~\ref{prop:FB}) that the natural map from $FB_N$ to $FF_N$ is a quasi-isometry.
Thus to show that $FF_N$ is hyperbolic
it suffices to establish hyperbolicity of $FB_N$.
To do the latter we use a hyperbolicity criterion for graphs (Proposition~\ref{prop:bow} below) due to
Bowditch~\cite{B}. Roughly, this criterion requires that there exists a family of paths $\mathcal G=\{g_{x,y}\}_{x,y}$ (where $x,y\in VX$) joining
$x$ to $y$ and that there exists a ``center"-like map $\Phi\from V(X)\times V(X) \times V(X)\to V(X)$, such that the pair $(\mathcal G, \Phi)$ satisfies some nice ``thin triangle"  type properties, see Definition~\ref{defn:thin} below.  Using Bowditch's criterion we obtain Corollary~\ref{cor:contraction} saying that if $X,Y$ are connected graphs, with $X$ hyperbolic and if $f\from X\to Y$ is a surjective Lipschitz graph-map with the property that if $d(f(x),f(y))$ is small then $f ([x,y])$ has bounded diameter, then $Y$ is also hyperbolic. Moreover, in this case $f([x,y])$ is uniformly Hausdorff-close to any geodesic $[f(x), f(y)]$ in $Y$. (See also Theorem 3.11 in \cite{MS}.)

We then construct a surjective Lipschitz map $f\from  FS_N'\to FB_N$, where
$FS_N'$ is the barycentric subdivision of $FS_N$. The map $f$ restricts to a natural bijection from a subset $S$ of $V(FS_N')$,
corresponding to $N$--roses,  to the set $V(FB_N)$ of vertices of $FB_N$. Thus we may, by abuse of notation, say that $S=V(FB_N)$ and that $f|_S=Id_S$.
In~\cite{HM} Handel and Mosher constructed nice paths $g_{x,y}$ given by ``folding
sequences''  between arbitrary vertices $x$ and $y$ of $FS_N'$, and proved that
these paths are quasigeodesics in $FS_N'$. To apply Corollary~\ref{cor:contraction} to the map $f\from  FS_N'\to FB_N$ it turns out to be enough to show that $f(g_{x,y})$ has bounded diameter if $x,y\in S$ and $d(f(x), f(y))\le 1$ in $FB_N$.
To do that we analyze the properties of the
Handel-Mosher folding sequences in this specific situation. The 
construction of $g_{x,y}$
for arbitrary $x,y\in V(FS_N')$ is fairly complicated. However, in our
situation, we have $x,y\in S$, so that $x,y$ correspond to free bases of $F_N$. In
this case the construction of $g_{x,y}$ becomes much easier and boils
down to using standard Stallings foldings (in the sense
of~\cite{KM,St}) to get from $x$ to $y$. Verifying that $f(g_{x,y})$
has bounded diameter in $FB_N$, assuming $d(f(x),f(y))\le 1$, becomes a
much simpler task. Thus we are able to conclude that $FB_N$ is Gromov-hyperbolic, and, moreover, that $f([x,y])$ is uniformly Hausdorff-close to any geodesic $[f(x), f(y)]$ in $FB_N$. Using the quasi-isometry between $FB_N$ and $FF_N$ provided by Proposition~\ref{prop:FB}, we then obtain the conclusion of Theorem~\ref{thm:A}.

Moreover, as we note in Remark~\ref{rem:geod}, our proof of Theorem~\ref{thm:A} provides a fairly explicit description of quasigeodesics joining arbitrary vertices (i.e. free bases) in $FB_N$ in terms of Stallings foldings.
 
{\it Acknowledgement:}  This paper grew out of our discussions during the November 2011 AIM SQuaRE Workshop ``Subgroups of $Out(F_N)$ and their action on outer space" at the American Institute of Mathematics in Palo Alto.
We thank AIM and the other participants of the workshop,
Thierry Coulbois, Matt Clay, Arnaud Hilion, Martin Lustig and Alexandra Pettet,
for the stimulating research environment at the workshop. We are also grateful to Patrick Reynolds for the many helpful comments on the initial version of this paper and to Saul Schleimer for a helpful discussion regarding references for Proposition~\ref{prop:qc}.
We especially thank the referee for the numerous useful comments and suggestions, and for providing Figure~\ref{Fi:1} (see Section~6 below) illustrating the relationships and the maps between the main objects considered in this paper.

\section{Hyperbolicity criteria for graphs}

\begin{conv}
From now on, unless specified otherwise, every connected graph $X$ will be 
considered as a geodesic metric space with the simplicial metric (where every 
edge has length $1$). As in the introduction, we denote
the vertex set of $X$ by $V(X)$. Also, when talking about a connected simplicial 
complex $Z$ as a metric space, we will in fact mean the $1$--skeleton 
$Z^{(1)}$ of $Z$ endowed with the simplicial metric.
\end{conv}

Let $X, Y$ be connected graphs. A \emph{graph-map} from $X$ to $Y$ is
a continuous function $f\from X\to Y$ such that $f(V(X))\subseteq
V(Y)$ (so that $f$ takes vertices to vertices), and such that for
every edge $e$ of $X$ $f(e)$ is an edge-path in $Y$ (where we allow
for an edge-path to be degenerate and to consist of a single vertex).
Note that if $f\from X\to Y$ is a graph-map and $X'$ is a subgraph of $X$
then $f\big|_{X'}: X'\to Y$ is also a graph-map and $f(X')$ is a subgraph of $Y$.

We say that a graph-map $f \from X\to Y$ is \emph{$L$--Lipschitz} (where
$L\ge 0$) if for every edge $e$ of $X$ the edge-path $f(e)$ has
simplicial length $\le L$.

\medskip

We use a characterization of hyperbolicity for a geodesic metric 
space $(X, d_X)$ that is due to Bowditch \cite{B}.
A similar hyperbolicity conditions have been originally stated 
by Masur and Minsky (see Theorem~2.3 in \cite{MM99}).
A related statement was also obtained by Hamenstadt~\cite{Ha}.
The following result is a slightly restated special case of
Proposition~3.1 in \cite{B}. 

\begin{defn}[Thin triangles structure]\label{defn:thin}
Let $X$ be a connected graph.
Let $\CG=\{g_{x,y}| x,y\in V(X)\}$ be a family of edge-paths in $X$ such that for any vertices $x,y$ of $X$ $g_{x,y}$ is a path from $x$ to $y$ in $X$.  Let 
$\Phi \from V(X) \times V(X) \times V(X) \to V(X)$ be a function such that for any $a,b,c \in V(X)$, 
\[
\Phi(a,b,c) = \Phi(b,c,a) =\Phi(c,a,b).
\]
Assume, for constants $B_1$ and $B_2$ that $\CG$ and $\Phi$ have the 
following properties:
\begin{enumerate}
\item For $x,y \in V(X) $, the Hausdorff distance between $g_{x,y}$ 
and $g_{y,x}$ is at most $B_2$. 
\item For $x,y \in V(X)$, $g_{x,y} \from [0,l] \to X$, $s,t \in [0,l]$
and $a,b \in V(X)$, assume that 
\[
d_X(a, g(s)) \le B_1
\qquad\text{and}\qquad
d_X(b, g(t)) \le B_1.
\]
Then, the Hausdorff distance between $g_{a,b}$ and $g_{x,y} \big|_{[s,t]}$
is at most $B_2$. 
\item For any $a,b,c \in V(X) $, the vertex $\Phi(a,b,c)$ is contained in 
a $B_2$--neighborhood of $g_{a,b}$.
\end{enumerate}
Then, we say that the pair $(\mathcal G, \Phi)$ is a 
\emph{$(B_1, B_2)$--thin triangles structure} on $X$.
\end{defn}


\begin{prop}[Bowditch] \label{prop:bow}
Let $X$ be a connected graph. For every $B_1>0$ and $B_2>0$, there is
$\delta>0$ and $H>0$ so that if $(\CG, \Phi)$ is a $(B_1,B_2)$--thin 
triangles structure on $X$ then $X$ is $\delta$--hyperbolic. Moreover, every path 
$g_{x,y}$ in $\mathcal G$ is  $H$--Hausdorff-close to any geodesic 
segment $[x,y]$.
\end{prop}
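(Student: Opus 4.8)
The statement is a ``guessing geodesics'' criterion: the combinatorial data $(\mathcal G,\Phi)$ is meant to serve as a coarse replacement for the a priori unknown geodesics of $X$, and the point is that a graph admitting such a replacement must be hyperbolic, with the replacement paths themselves uniformly close to genuine geodesics. The plan is to proceed in three steps. \emph{Step 1 (short moves give short paths).} Applying property (2) of Definition~\ref{defn:thin} with $a=g_{x,y}(s)$ and $b=g_{x,y}(t)$ --- so that the distance hypotheses hold with slack $0$ --- shows that every subpath $g_{x,y}\big|_{[s,t]}$ is $B_2$--Hausdorff close to the connecting path joining $g_{x,y}(s)$ to $g_{x,y}(t)$; specializing to $s=t$ and comparing with a constant path gives $\diam g_{p,q}\le 2B_2$ whenever $d_X(p,q)\le B_1$. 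In particular, after replacing $g_{x,y}$ by an edge when $d_X(x,y)\le 1$ (needed only if $B_1<1$), the family $\mathcal G$ has the ``bounded diameter on close pairs'' property used below.

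\emph{Step 2 ($\mathcal G$--triangles are uniformly thin).} Fix vertices $x,y,z$ and put $m=\Phi(x,y,z)$; by the cyclic symmetry of $\Phi$ and property (3), $m$ lies within $B_2$ of each of $g_{x,y}$, $g_{y,z}$, $g_{z,x}$. Choose $p_0=g_{x,y}(s_0)$ with $d_X(m,p_0)\le B_2$, which splits $g_{x,y}$ into the subpaths $g_{x,y}\big|_{[0,s_0]}$ and $g_{x,y}\big|_{[s_0,\ell]}$. When $B_2\le B_1$ one may chain property (2): $g_{x,y}\big|_{[0,s_0]}$ is $B_2$--close to the auxiliary path $g_{x,m}$ (apply (2) on $g_{x,y}$ with endpoints $x$ and $m$) and $g_{x,m}$ is $B_2$--close to $g_{x,z}$ (apply (2) on $g_{x,z}$ with endpoints $x$ and $m$), so $g_{x,y}\big|_{[0,s_0]}$ lies in the $2B_2$--neighborhood of $g_{x,z}$; symmetrically $g_{x,y}\big|_{[s_0,\ell]}$ lies in the $2B_2$--neighborhood of $g_{z,y}$. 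Hence there is $r=r(B_1,B_2)$ with
\[
g_{x,y}\subseteq\{\,p\in X:\ d_X\bigl(p,\ g_{x,z}\cup g_{z,y}\bigr)\le r\,\}\qquad\text{for all }x,y,z\in V(X).
\]
When $B_2>B_1$ the point $p_0$ need not lie within $B_1$ of $m$, so (2) is not directly applicable; one then iterates the center construction on sub--triangles, arranging the bookkeeping so that the accumulated neighborhoods sum to a bound depending only on $B_1$ and $B_2$.

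\emph{Step 3 (from thin $\mathcal G$--triangles to hyperbolicity).} With Steps 1--2 in hand, I would invoke the standard fact (essentially the Masur--Minsky hyperbolicity criterion; cf.\ Theorem~2.3 in \cite{MM99}) that a connected graph carrying a family of connecting paths which (i) have uniformly bounded diameter on pairs of vertices at distance $\le 1$ and (ii) satisfy the uniform thin--triangle inclusion of Step 2 is $\delta$--hyperbolic with $\delta=\delta(r)$, and, moreover, that each such connecting path is $H$--Hausdorff close to any geodesic with the same endpoints, with $H=H(r)$. The usual proof of that fact runs: a subdivision / local--to--global argument upgrades the connecting paths to uniform quasigeodesics, a guessing--geodesics estimate then produces a uniform Gromov four--point inequality, and a Morse--type comparison yields the closeness to genuine geodesics; combined with Steps 1--2 this gives both conclusions of the Proposition.

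The main obstacle is Step 2 --- extracting a genuinely \emph{uniform} thin--triangle bound from properties (2)--(3). The structure constants cannot be rescaled away (one may enlarge $B_2$ and shrink $B_1$, but not conversely), so the regime $B_2>B_1$ genuinely requires iterating the center construction, and the delicate point is to make that iteration terminate with a bound depending only on $B_1$ and $B_2$ --- a convergent geometric--series estimate. By contrast, Step 1 is an immediate application of property (2), and Step 3 is the classical passage from a thin--triangles condition to $\delta$--hyperbolicity.
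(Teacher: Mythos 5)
First, a point of comparison: the paper does not actually prove Proposition~\ref{prop:bow} --- it is quoted as ``a slightly restated special case of Proposition~3.1 in \cite{B}'', so there is no in-paper argument to measure your proposal against. Bowditch's own proof of his criterion does not follow your route: he uses the centre map $\Phi$ to triangulate cycles and produce efficient fillings, deducing hyperbolicity from a subquadratic isoperimetric inequality. Your plan --- derive ``bounded diameter on adjacent pairs'' and a uniform thin-triangle inclusion for the family $\mathcal G$, and then feed these into the centre-free ``guessing geodesics'' lemma --- is a legitimate alternative strategy in principle, but note that the black box you invoke in Step~3 is essentially the statement being proved in a different packaging; so what you have written is a reduction of one form of the criterion to another, and the mathematical content of your proposal lives entirely in Steps~1--2.

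That is where the genuine gap sits, and it is the one you flag yourself. Your chaining argument in Step~2 yields $g_{x,y}\subseteq N_{2B_2}\bigl(g_{x,z}\cup g_{z,y}\bigr)$ only when $B_2\le B_1$, because condition~(2) of Definition~\ref{defn:thin} accepts only points within $B_1$ of the reference path, while condition~(3) places the centre $m=\Phi(x,y,z)$ merely within $B_2$ of each side. The sentence ``one then iterates the center construction on sub-triangles, arranging the bookkeeping so that the accumulated neighborhoods sum to a bound'' is not an argument: you do not specify which sub-triangles are formed, why the iteration terminates, or why the errors converge rather than accumulate --- each fresh application of (3) reintroduces an error of size $B_2>B_1$, so the naive iteration makes no progress. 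Nor is this a corner case one can legislate away: as you correctly observe, a $(B_1,B_2)$-structure is only a $(B_1',B_2')$-structure for $B_1'\le B_1$ and $B_2'\ge B_2$, and in the paper's own application (Corollary~\ref{cor:contraction}) one takes $B_1=M_1$ and $B_2=2L\delta_0+M_2$, so the regime $B_2>B_1$ is exactly the one that matters. A smaller issue of the same flavour affects Step~1: if $B_1<1$ you cannot apply (2) to a pair of adjacent vertices, and ``replacing $g_{x,y}$ by an edge'' changes the family $\mathcal G$, so all of (1)--(3) would have to be re-verified for the modified family. To complete your route you would need a derivation of the thin-triangle inclusion that feeds condition~(2) only $B_1$-scale inputs --- for instance by tracking how the centres $\Phi(x,z,g_{x,y}(s))$ vary with $s$, which requires a coarse-Lipschitz property of $\Phi$ that is not among the hypotheses --- or else abandon the reduction and argue via fillings as Bowditch does.
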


\begin{cor} \label{cor:contraction} 
Let $X$ and $Y$ be connected graphs and assume that $X$ is 
$\delta_0$--Gromov-hyperbolic.  Let $f\from X\to Y$ be a $L$--Lipschitz 
graph-map and  $f(V(X))=V(Y)$. Suppose there are integers $M_1>0$ and 
$M_2>0$ so that, for $x, y \in V(X)$, if $d_Y(f(x),f(y))\le M_1$ then 
$\diam_Y f([x,y]) \le M_2$.

Then, there exists $\delta_1>0$ such  that $Y$ is 
$\delta_1$--hyperbolic. Moreover, there exists $H>0$ such that for any vertices $x,y$ of $X$ the 
path $f([x,y])$ is $H$--Hausdorff close to any geodesic $[f(x), f(y)]$ in $Y$.
\end{cor}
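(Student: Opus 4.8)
The plan is to manufacture a $(B_1,B_2)$--thin triangles structure on $Y$ out of the geodesics of $X$ pushed forward by $f$, and then quote Proposition~\ref{prop:bow}. First I would fix, for each pair $x,y\in V(X)$, a geodesic $[x,y]$ in $X$; since $f$ takes vertices to vertices and $f(V(X))=V(Y)$, every vertex of $Y$ equals $f(v)$ for some $v\in V(X)$, so I can choose a set-section $\sigma\from V(Y)\to V(X)$ with $f\circ\sigma=\mathrm{id}_{V(Y)}$. For $p,q\in V(Y)$ I then define $g_{p,q}$ to be the edge-path in $Y$ obtained by reparametrizing $f\big([\sigma(p),\sigma(q)]\big)$; this is a path from $p$ to $q$ because $f$ is a graph-map. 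Since $f$ is $L$--Lipschitz, $g_{p,q}$ has length at most $L$ times the $X$--length of $[\sigma(p),\sigma(q)]$, so it is a genuine (finite) edge-path. Symmetrically I would define $\Phi$ on $Y$ by transporting a coarse center in $X$: given $p,q,r\in V(Y)$, let $m$ be a point on $[\sigma(p),\sigma(q)]$ within bounded distance of both other sides (a coarse center of the $\delta_0$--thin triangle in $X$ exists with the bound depending only on $\delta_0$), pick a vertex of $X$ within distance $1$ of $m$, choose it symmetrically in $p,q,r$ (e.g. by fixing an ordering of the three geodesics and always reading off the center from a canonical one), and set $\Phi(p,q,r):=f(\text{that vertex})$.

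Next I would verify the three axioms of Definition~\ref{defn:thin} with constants depending only on $L$, $\delta_0$, $M_1$, $M_2$. For axiom~(3): the coarse center $m$ of the triangle $[\sigma(p),\sigma(q)]\cup[\sigma(q),\sigma(r)]\cup[\sigma(r),\sigma(p)]$ lies within a uniform distance of $[\sigma(p),\sigma(q)]$, hence (after the distance--$1$ vertex adjustment) its $f$--image lies within $L\cdot(\text{that distance})+L$ of $g_{p,q}$. For axiom~(1): the Hausdorff distance in $Y$ between $f([\sigma(p),\sigma(q)])$ and $f([\sigma(q),\sigma(p)])$ is at most $L$ times the Hausdorff distance in $X$ between the two chosen geodesics $[\sigma(p),\sigma(q)]$ and $[\sigma(q),\sigma(p)]$, and the latter is at most $2\delta_0$ by thinness of bigons in a $\delta_0$--hyperbolic space; so $B_2$ can absorb $2L\delta_0$. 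Axiom~(2) is the substantive one. Given $s,t$ on $g_{p,q}=f([\sigma(p),\sigma(q)])$ and vertices $a,b\in V(Y)$ with $d_Y(a,g_{p,q}(s))\le B_1$ and $d_Y(b,g_{p,q}(t))\le B_1$, I want the Hausdorff distance between $g_{a,b}=f([\sigma(a),\sigma(b)])$ and the subpath $g_{p,q}\big|_{[s,t]}$ to be bounded. Here is where the hypothesis on $f$ enters: $g_{p,q}(s)$ is $f$ of some vertex $x_s$ on $[\sigma(p),\sigma(q)]$, and $f(x_s)$ is $Y$--close to $a=f(\sigma(a))$, so $d_Y(f(x_s),f(\sigma(a)))\le B_1$; if $B_1$ is chosen $\le M_1$ this forces $\diam_Y f([x_s,\sigma(a)])\le M_2$, i.e. the geodesics $[\sigma(p),\sigma(q)]$ near $x_s$ and $[\,\cdot\,,\sigma(a)]$ stay uniformly close after applying $f$. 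Combining this at both ends with the stability of quasigeodesics: in the $\delta_0$--hyperbolic space $X$, the concatenation $[\sigma(a),x_s]\cdot [x_s,x_t]\cdot[x_t,\sigma(b)]$ (where $[x_s,x_t]$ is the subsegment of $[\sigma(p),\sigma(q)]$) is a quasigeodesic with uniform constants, hence Hausdorff-close in $X$ to $[\sigma(a),\sigma(b)]$; pushing forward by the $L$--Lipschitz map $f$ and using the two $M_2$--bounds at the ends gives the desired $B_2$--bound in $Y$.

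Having checked that $(\mathcal G,\Phi)$ is a $(B_1,B_2)$--thin triangles structure on $Y$ with $B_1,B_2$ depending only on $L,\delta_0,M_1,M_2$ (and $B_1\le M_1$ arranged), Proposition~\ref{prop:bow} yields $\delta_1>0$ with $Y$ being $\delta_1$--hyperbolic, and an $H'>0$ so that each $g_{p,q}$ is $H'$--Hausdorff close to any geodesic $[p,q]$ in $Y$. It remains to pass from $g_{p,q}$ to $f([x,y])$ for arbitrary $x,y\in V(X)$: by definition $g_{f(x),f(y)}=f([\sigma(f(x)),\sigma(f(y))])$, which is not literally $f([x,y])$. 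But $d_Y(f(x),f(\sigma(f(x))))=0$ since $f\circ\sigma=\mathrm{id}$ and $f(x)\in V(Y)$ forces $\sigma(f(x))$ to satisfy $f(\sigma(f(x)))=f(x)$; applying the $B_1\le M_1$ hypothesis (with the pair $x,\sigma(f(x))$) gives $\diam_Y f([x,\sigma(f(x))])\le M_2$, and likewise at the $y$ end. Since $[x,y]$ and $[x,\sigma(f(x))]\cdot[\sigma(f(x)),\sigma(f(y))]\cdot[\sigma(f(y)),y]$ are both uniform quasigeodesics in the $\delta_0$--hyperbolic $X$, they are Hausdorff-close in $X$, so $f([x,y])$ is within bounded $Y$--distance of $f([\sigma(f(x)),\sigma(f(y))])=g_{f(x),f(y)}$, which in turn is $H'$--close to $[f(x),f(y)]$. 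Taking $H$ to absorb all these additive constants completes the proof. I expect the main obstacle to be axiom~(2): one has to be careful that the "bounded diameter of $f([\cdot,\cdot])$ when endpoints are $f$--close" hypothesis, which a priori only controls images of geodesics, genuinely suffices to control the images of the relevant quasigeodesic concatenations, and that all the stability constants remain uniform.
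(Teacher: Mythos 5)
Your overall architecture is exactly the paper's: push a thin-triangles structure on $X$ forward through a section $\sigma$ of $f$, check Definition~\ref{defn:thin} for the image family, invoke Proposition~\ref{prop:bow}, and then reconcile $f([x,y])$ with $f([\sigma(f(x)),\sigma(f(y))])$ at the end. Conditions (1) and (3) are handled the same way. However, the step you yourself flag as the main obstacle --- axiom (2) --- rests on a claim that is false as stated, and the same claim reappears in your final paragraph. You assert that the concatenation $[\sigma(a),x_s]\cdot[x_s,x_t]\cdot[x_t,\sigma(b)]$ is a uniform quasigeodesic in $X$, hence $X$-Hausdorff-close to $[\sigma(a),\sigma(b)]$. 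Nothing bounds the $X$-lengths of the legs $[\sigma(a),x_s]$ and $[x_t,\sigma(b)]$: the hypothesis only controls the $Y$-diameter of their $f$-images. So the concatenation can backtrack arbitrarily far (take $\sigma(a),\sigma(b)$ at $X$-distance $1$ while $x_s=x_t$ sits at $X$-distance $R$ from both), and it is neither a uniform quasigeodesic nor within bounded $X$-Hausdorff distance of $[\sigma(a),\sigma(b)]$. The identical problem affects $[x,\sigma(f(x))]\cdot[\sigma(f(x)),\sigma(f(y))]\cdot[\sigma(f(y)),y]$ at the end, since $d_X(x,\sigma(f(x)))$ is unbounded even though $f(x)=f(\sigma(f(x)))$.

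The repair is the argument the paper actually runs, and it only ever produces bounds in $Y$, never in $X$: by $2\delta_0$-thinness of quadrilaterals in $X$, every point of $[\sigma(a),\sigma(b)]$ lies within $2\delta_0$ of $[x_s,x_t]\cup[\sigma(a),x_s]\cup[x_t,\sigma(b)]$, and symmetrically every point of $[x_s,x_t]$ lies within $2\delta_0$ of the union of $[\sigma(a),\sigma(b)]$ and the two legs. Applying the $L$-Lipschitz map $f$ and using that each leg has $f$-image of $Y$-diameter at most $M_2$ containing the endpoint $a$ (resp.\ $b$) of $g_{a,b}$, one concludes that each of $g_{a,b}$ and $g_{p,q}\big|_{[s,t]}$ lies in the $(2L\delta_0+M_2)$-neighborhood of the other in $Y$; the same device handles the final reconciliation. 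With that substitution your proof goes through and coincides with the paper's.
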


\begin{proof}
For every pair of vertices $a,b\in X$, let $g_{a,b}$ be any geodesic segment 
$[a,b]$ and let $\CG$ be the set of all these paths. Also. for any vertices $a,b,c$ 
of $X$ let 
\[
\Phi(a,b,c) = \Phi(b,c,a) =\Phi(c,a,b)
\] 
be any vertex of $X$ that is at most $\delta_0$ away from each of 
$[a,b], [b,c], [a,c]$. The hyperbolicity of $X$ implies that 
$(\mathcal G, \Phi)$ forms a $(b_1,b_2)$--thin triangles structure on $X$
for some $b_1$ and $b_2$ depending on $\delta_0$. 
We now \emph{push} this structure $(\mathcal G, \Phi)$ forward via 
the map $f$.

For any vertex $y$ of $Y$ choose a vertex $v_y$ of $X$ such that $f(v_y)=y$.  

For any vertices $y, z\in Y$ choose a geodesic $g_{v_y, v_z} $ from $v_y$ to $v_z$ in $X$ (note that such a geodesic is generally not unique) and let $g_{y,z}':=f(g_{v_y, v_z} )$. Then let 
$\CG'=\{g_{y,z}'| y, z \in V(Y)\}$.
Now, for any vertices $w,y,z \in Y$ put 
\[
\Phi'(w,y,z):=f( \Phi(v_w,v_y,v_z)).
\]
We claim that, the pair $(\CG', \Phi')$ is a $(B_1, B_2)$--thin triangles 
structure for $Y$ for some $B_1$ and $B_2$. 
The conditions (1) and (3) of Definition~\ref{defn:thin} are satisfied 
as long as $B_2 \geq L\, b_2$ since $f$ is $L$--Lipschitz. 
Thus we only need to verify that condition (2) of Definition~\ref{defn:thin} 
holds for $(\CG', \Phi')$.

Let $y,z$ be vertices of $Y$, $v_y, v_z$ be the associated
vertices in $X$, $g_{v_y, v_z} \from [0,l] \to X$ be the path
in $\CG$ connecting $v_x$ to $v_y$ and $g'_{y,z} \from [0,l'] \to Y$
be the $f$-image of $g_{v_y, v_z}$. In the interest of brevity, we denote these paths 
simply by $g$ and $g'$. 

Let $B_1 = M_1$ and let, $a,b \in Y$ and $s', t' \in [0,l']$ be such that
\[
d_Y(a, g'(s')) \le B_1
\qquad\text{and}\qquad
d_Y(b, g'(t')) \le B_1.
\]
We need to bound the Hausdorff distance between 
$g_{a,b}'=f(g_{v_a,v_b})$ and $g'\big|_{[s',t']}$.

Let $s, t \in [0,l]$ be such that $f g(s) = g'(s')$ and $f g (t) = g'(t')$.
Let $u$ be a vertex of $g_{a,b}'$. From hyperbolicity, we have
$v_u$ is contained in a $2\delta_0$--neighborhood of the union
\[
g\big|_{[s,t]}\cup [g(s), v_a] \cup [g(t), v_b].
\]
Thus $u$ is $(2 L \, \delta_0)$--close to the union 
\[
g'\big|_{[s',t']} \cup f([g(s), v_a]) \cup f([g(t), v_b]).
\]
But the $d_Y$--diameters of $f([g(s), v_a])$ $f([g(t), v_b])$ are less than $M_2$. 
Hence, $u$ is in $(2 L \, \delta_0 + M_2)$--neighborhood of $g'\big|_{[s',t']}$.
Similarly, $g'\big|_{[s',t']}$ is in the same size neighborhood of
$g'_{a,b}$. The condition (2) of Definition~\ref{defn:thin}
holds for $B_2 = (2 L \, \delta_0 + M_2)$.

Therefore, by Proposition~\ref{prop:bow}, the graph $Y$ is $\delta_1$-hyperbolic, 
and, moreover, for any two vertices $y,z$ of $Y$ the path $g'_{y,z}=f(g_{v_y,v_z})$ 
is $H'$--Hausdorff close to $[y,z]$ for some constant $H'\ge 0$ independent 
of $y,z$. Since $g_{v_y,v_z}$ was chosen to be a geodesic from $v_y$ to $v_z$ in $X$ and any two such geodesics are $\delta_0$-Hausdorff close, by increasing the constant $H'$ we also get that for any geodesic $[y,z]$ from $y$ to $z$ in $Y$ and any geodesic $[v_y,v_z]$ from $v_y$ to $v_z$ in $X$ the paths $[y,z]$ and $f([v_y,v_z])$ are $H'$-Hausdorff close in $Y$.  

Now let $y,z$ be any vertices of $Y$ and let $v_y',v_z'$ be arbitrary vertices of $X$ such that $f(v_y')=y$ and $f(v_z')=z$. A geodesic $[v_y',v_z']$ in $X$ is contained in the $2\delta_0$-neighborhood of $[v_y',v_y]\cup [v_y,v_z]\cup [v_z,v_z']$. Since $f(v_y)=f(v_y')=y$ and $f(v_z)=f(v_z')=z$, the assumptions on $f$ imply that $f([v_y',v_y])$ is contained in the $M_2$-ball around $y$ and  $f([v_z,v_z'])$ is contained in the $M_2$-ball around $z$ in $Y$. Moreover, we have already shown that $f([v_y,v_z])$ is $H'$-Hausdorff close to $[y,z]$. Therefore $f([v_y',v_z'])$ is contained in the $H$-neighborhood of $[y,z]=[f(v_y'),f(y_z')]$ with $H=2L\delta_0+M_2+H'$.  A similar argument shows that $[y,z]=[f(v_y'),f(y_z')]$ is contained in the $H$-neighborhood of  $f([v_y',v_z'])$. Thus $[y,z]=[f(v_y'),f(v_z')]$ and $f([v_y',v_z'])$ are $H$-Hausdorff close, as required.

\end{proof}

\begin{prop}\label{prop:image}
For any positive integers $\delta_0$, $L$, $M$ and $D$ there exist $\delta_1>0$ and $H>0$ so 
that the following holds. 

Let $X$, $Y$ be connected graphs, such that $X$ is 
$\delta_0$--hyperbolic. Let  $f \from X\to Y$ be a $L$--Lipschitz graph map 
for some $L\ge 0$. Let $S\subseteq V(X)$ be such that:
\begin{enumerate}
\item We have $f(S)=V(Y)$.
\item The set $S$ is $D$--dense in $X$ for some $D>0$.
\item For $x,y\in S$, if $d(f(x),f(y))\le 1$ 
then for any geodesic $[x,y]$ in $X$ we have 
\[
\diam_Y( f([x,y]))\le M.
\]
\end{enumerate}
Then $Y$ is $\delta_1$--hyperbolic and, for any $x,y\in V(X)$ and any 
geodesic $[x,y]$ in $X$, the path $f([x,y])$ is $H$--Hausdorff close to any 
geodesic $[f(x),f(y)]$  in $Y$.
\end{prop}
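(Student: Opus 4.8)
The plan is to deduce Proposition~\ref{prop:image} from Corollary~\ref{cor:contraction}. The obstacle is that Corollary~\ref{cor:contraction} requires the hypothesis ``$d_Y(f(x),f(y))\le M_1 \Rightarrow \diam_Y f([x,y])\le M_2$'' for \emph{all} vertices $x,y\in V(X)$, whereas here we are only given such control when $x,y$ lie in the $D$--dense subset $S$ and when $d(f(x),f(y))\le 1$. So the bulk of the work is upgrading condition (3) of Proposition~\ref{prop:image} to the hypothesis of Corollary~\ref{cor:contraction}, after which the conclusion follows verbatim.

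First I would pass from ``$d(f(x),f(y))\le 1$ for $x,y\in S$'' to ``$d(f(x),f(y))\le K$ for $x,y\in S$'' for any fixed $K$, at the cost of enlarging $M$ to some $M'=M'(M,\delta_0,K)$. Given $x,y\in S$ with $d_Y(f(x),f(y))=k\le K$, pick a geodesic $f(x)=y_0,y_1,\dots,y_k=f(y)$ in $Y$; using $f(S)=V(Y)$ choose $x_i\in S$ with $f(x_i)=y_i$ (and $x_0=x$, $x_k=y$). Then consecutive $x_i,x_{i+1}$ satisfy $d(f(x_i),f(x_{i+1}))\le 1$, so $\diam_Y f([x_i,x_{i+1}])\le M$ for the chosen geodesics. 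Since $X$ is $\delta_0$--hyperbolic, a geodesic $[x,y]$ in $X$ is contained in the $(k\cdot 2\delta_0)$--neighborhood of $\bigcup_{i=0}^{k-1}[x_i,x_{i+1}]$ (iterating the thin-triangles/thin-polygon estimate), hence $f([x,y])$ lies in the $(2L\delta_0 K + kM)$--neighborhood of $\bigcup_i f([x_i,x_{i+1}])$, and the latter set has diameter at most $kM \le KM$ (each piece has diameter $\le M$ and consecutive pieces share the point $y_i$, wait—they share $f(x_i)=y_i$). So $\diam_Y f([x,y])\le 2L\delta_0 K + 2KM =: M'$.

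Next I would remove the restriction $x,y\in S$ using $D$--density, again enlarging the constant. Given arbitrary $x,y\in V(X)$ with $d_Y(f(x),f(y))\le 1$, choose $x',y'\in S$ with $d_X(x,x')\le D$ and $d_X(y,y')\le D$; then $d_Y(f(x'),f(y'))\le 1 + 2LD =: K$, so by the previous step $\diam_Y f([x',y'])\le M'$ for geodesics in $X$. A geodesic $[x,y]$ is in the $2\delta_0$--neighborhood of $[x,x']\cup[x',y']\cup[y',y]$, and $f$ is $L$--Lipschitz with $d_X(x,x'),d_X(y,y')\le D$, so $f([x,y])$ lies in the $(2L\delta_0 + M' + 2LD)$--neighborhood of $f(x')$—hence $\diam_Y f([x,y]) \le M'':= 2(2L\delta_0 + M' + 2LD)$. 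This is exactly condition (3) of Corollary~\ref{cor:contraction} with $M_1=1$ and $M_2=M''$, and $f(V(X))=f(S)=V(Y)$ gives surjectivity, so Corollary~\ref{cor:contraction} applies and yields $\delta_1$ and $H$ with the stated properties; since all the enlarged constants $K,M',M''$ depend only on $\delta_0,L,M,D$, so do $\delta_1$ and $H$. I would carry out the steps in this order: (a) uniform polygon-thinness bound in $X$ (iterate $\delta_0$--thinness to control a geodesic by a concatenation of $k$ geodesics), (b) the $S$-to-$S$, distance-$\le K$ estimate, (c) the density step removing membership in $S$, (d) invoke Corollary~\ref{cor:contraction}. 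The main technical point to be careful about is (a)—getting the constant in the $k$-gon estimate linear in $k$ with coefficient depending only on $\delta_0$—but this is standard hyperbolic geometry and no single step is genuinely hard; the proposition is essentially a bookkeeping strengthening of Corollary~\ref{cor:contraction}.
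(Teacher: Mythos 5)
Your proposal is correct and follows essentially the same route as the paper's proof: first upgrade condition (3) from distance $\le 1$ on $S$ to bounded distance on $S$ by lifting a geodesic in $Y$ through $f(S)=V(Y)$ and using thinness of polygons in $X$, then use $D$--density of $S$ to handle arbitrary vertices, and finally invoke Corollary~\ref{cor:contraction}. The only (immaterial) difference is bookkeeping: the paper proves the upgraded bound for arbitrary thresholds $m_1$ and $M_1$ before applying the corollary, while you fix $M_1=1$; the constants differ slightly but both arguments are sound.
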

\begin{proof}
First we show that, for every $m_1>0$, there is $m_2>0$ so that 
whenever $x,y\in S$ satisfy $d_Y(f(x),f(y))\le m_1$ then 
$\diam_Y( f([x,y]))\le m_2$. Indeed let $x,y\in S$ be as above and consider a 
geodesic path $[f(x),f(y)]$ in $Y$. Let 
\[
f(x)=z_0, z_1, \dots, z_t=f(y), \qquad t <m_1
\]
be the sequence of consecutive vertices on $[f(x),f(y)]$. 
Let $x_0=x$, $x_t=y$ and for $1\le i\le t-1$ let $x_i\in S$ be 
such that $f(x_i)=z_i$. Such $x_i$ exist since by assumption $f(S)=V(Y)$. 
We have $\diam_Y f([x_i, x_{i+1}])\le M$. 
By hyperbolicity, the geodesic $[x,y]$ is contained in the 
$(m_1\delta_0)$--neighborhood of the union 
\[
\bigcup_{i=0}^{t-1} \, [x_i, x_{i+1}].
\]
Since $f$ is $L$--Lipschitz, $f([x,y])$ is contained in the 
$(L m_1 \delta_0)$--neighborhood of  
\[
\bigcup_{i=0}^{t-1} f\big([x_i, x_{i+1}]\big).
\]
But each $f([x_i, x_{i+1}])$ has diameter $\le M$. Therefore, 
$f([x,y])$ has a diameter of at most $m_2=(m_1 M+2 L m_1 \delta_0)$. 

Now let $M_1\ge 0$ and $x,y\in V(X)$ be arbitrary vertices with
$d_Y(f(x),f(y))\le M_1$.  Since $S$ is $D$--dense in $X$, there exist 
$x',y'\in S$ such that $d(x,x'), d(y,y')\le D$.  The fact that $f$ is $L$--Lipschitz 
implies that $d(f(x'),f(y'))\le M_1+2DL$. Therefore, by the above claim,
it follows that the
\[
\diam_Y f([x',y']) \le m_2(M_1+2DL).
\]
Since $X$ is $\delta_0$--hyperbolic and $d(x,x'), d(y,y')\le D$, 
we have that $[x,y]$ and $[x',y']$ are $(2\delta+2D)$--Hausdorff close. 
Again, using that $f$ is $L$--Lipschitz, we conclude that $f([x,y])$ 
has a diameter of at most 
\[
M_2= m_2(M_1+2DL)+ 4L(2\delta_0+2D).
\]
The assumption of Corollary~\ref{cor:contraction} are now satisfied for
constants $\delta_0$, $L$, $M_1$ and $M_2$.
Proposition~\ref{prop:image} now follows from Corollary~\ref{cor:contraction}.
\end{proof}

Proposition~\ref{prop:image} easily implies the well-known fact that ``coning-off" or ``electrifying" a family of uniformly quasiconvex subsets in a hyperbolic space produces a hyperbolic space.
Various versions of this statement have multiple appearances in the literature; see, for example, Lemma 4.5 and Proposition~4.6 in \cite{Farb98}, Proposition~7.12 in \cite{Bow97}, Lemma~2.3 in \cite{MjR}, Theorem~3.4 in~\cite{DG},  etc.  Maher and Schleimer~\cite{MahS} appear to be the first once to explicitly note that after ``electrifying'' a family of uniformly quasiconvex subsets in a hyperbolic space,  not only is the resulting space again hyperbolic, but the image of a geodesic is a reparameterized quasigeodesic.

We give a version of the ``coning-off'' statement here phrased in the context of graphs with simplicial metrics.

\begin{prop}\label{prop:qc}
Let $X$ be a connected graph with simplicial metric $d_X$ such that $(X,d_X)$ is $\delta_0$-hyperbolic.
Let $C>0$ and $(X_j)_{j\in J}$ be a family of subgraphs of $X$ such that each $X_j$ is a $C$-quasiconvex subset of $X$.
Let $Y$ be the graph obtained from $X$ by adding to $X$ the new edges $e_{x,y,j}$ with endpoints $x$, $y$ whenever $j\in J$ and $x,y$ are vertices of $X_j$ (thus $X$ is a subgraph of $Y$).  Let $d_Y$ be the simplicial metric on $Y$.

Then $Y$ is $\delta_1$-hyperbolic for some constant $\delta_1>0$ depending only on $C$ and $\delta_0$. Moreover there exists $H=H(C,\delta_0)>0$ such that whenever $x,y\in V(X)$, $[x,y]_X$ is a $d_X$-geodesic from $x$ to $y$ in $X$ and $[x,y]_Y$ is a $d_Y$-geodesic from $x$ to $y$ in $Y$ then $[x,y]_X$ and $[x,y]_Y$ are $H$-Hausdorff close in $(Y,d_Y)$.
\end{prop}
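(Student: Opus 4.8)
The plan is to deduce Proposition~\ref{prop:qc} directly from Proposition~\ref{prop:image}, applied to the inclusion map of $X$ into the coned-off graph $Y$.

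First I would record the easy structural facts. Since the new edges $e_{x,y,j}$ only join vertices that already belong to $X$, we have $V(Y)=V(X)$, the graph $Y$ contains $X$ as a spanning connected subgraph (so $Y$ is connected), and the inclusion $f\from X\hookrightarrow Y$ is a graph-map which is $1$--Lipschitz, since each edge of $X$ is a single edge of $Y$. I would then take $S:=V(X)=V(Y)$, so that $f(S)=V(Y)$ and $S$ is $1$--dense in $X$; thus hypotheses (1) and (2) of Proposition~\ref{prop:image} hold with $L=1$ and $D=1$.

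The heart of the matter is verifying hypothesis (3), namely that for $x,y\in S$ with $d_Y(f(x),f(y))=d_Y(x,y)\le 1$ every $d_X$--geodesic $[x,y]$ in $X$ satisfies $\diam_Y\bigl(f([x,y])\bigr)\le M$ for a suitable $M=M(C)$. If $x=y$ this is trivial, and if $x,y$ are joined in $Y$ by an edge of $X$ then $[x,y]$ is that single edge and has $d_Y$--diameter $\le 1$. The remaining case is that $x,y$ are joined in $Y$ by a new edge $e_{x,y,j}$, i.e. $x$ and $y$ are both vertices of some $X_j$. Here I use $C$--quasiconvexity of $X_j$: the geodesic $[x,y]$ lies in the $C$--neighbourhood of $X_j$, so every vertex of $[x,y]$ lies within $d_X$--distance $C+1$, hence within $d_Y$--distance $C+1$ (as $d_Y\le d_X$ on $V(X)$), of some vertex of $X_j$; and since any two vertices of $X_j$ are joined in $Y$ by a coned edge, any two vertices $u,v$ of $[x,y]$ satisfy $d_Y(u,v)\le (C+1)+1+(C+1)=2C+3$. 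So one may take $M=\lceil 2C\rceil+3$. This single observation — that coning off the family $(X_j)$ collapses each $X_j$ to uniformly bounded $d_Y$--diameter — is the only real content; everything else is bookkeeping, so I do not expect any serious obstacle.

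With all three hypotheses of Proposition~\ref{prop:image} verified for the integer constants $\delta_0$, $L=1$, $M=\lceil 2C\rceil+3$, $D=1$, I would invoke that proposition to obtain $\delta_1>0$ and $H>0$, depending only on $\delta_0$, $L$, $M$, $D$, and hence only on $\delta_0$ and $C$, such that $Y$ is $\delta_1$--hyperbolic and such that for any $x,y\in V(X)$ and any $d_X$--geodesic $[x,y]_X$ the path $f([x,y]_X)$ is $H$--Hausdorff close in $(Y,d_Y)$ to any $d_Y$--geodesic from $f(x)=x$ to $f(y)=y$. Since $f$ is the inclusion, $f([x,y]_X)$ is just $[x,y]_X$ viewed inside $Y$, and $[f(x),f(y)]=[x,y]_Y$, so this is precisely the ``moreover'' assertion of Proposition~\ref{prop:qc}, completing the proof.
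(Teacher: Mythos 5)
Your proposal is correct and follows essentially the same route as the paper: both reduce Proposition~\ref{prop:qc} to Proposition~\ref{prop:image} via the inclusion $X\hookrightarrow Y$ with $S=V(X)$, and both verify hypothesis~(3) in the only nontrivial case ($x,y$ joined by a coned edge $e_{x,y,j}$) by using $C$--quasiconvexity of $X_j$ together with the fact that $X_j$ has $d_Y$--diameter at most $1$. The only difference is cosmetic: you bound $d_Y(u,v)$ for arbitrary vertices $u,v$ of $[x,y]_X$ by $2C+3$, while the paper bounds $d_Y(u,x)$ by $C+1$ for all $u$; both yield a uniform diameter bound depending only on $C$.
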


\begin{proof}
Let $f:X\to Y$ be the inclusion map and put $S=V(X)$. 
We claim that the conditions of Proposition~\ref{prop:image} are satisfied.

By construction $V(X)=V(Y)$, so $f(S)=V(Y)$. Also, the map $f$ is obviously 1-Lipschitz.
Suppose now that $x,y\in V(X)$ are such that $d_Y(x,y)\le 1$. If $d_Y(x,y)<1$ then $d_Y(x,y)=0$, so that $x=y$. In this case it is obvious that condition~(3) of Proposition~\ref{prop:image} holds for $f([x,y]_X)$. Suppose now that $d_Y(x,y)=1$. Thus there exists an edge $e$ in $Y$ with endpoints $x,y$. If $e$ is an edge of $X$ then $d_X(x,y)=1$ and it is again obvious that condition~(3) of Proposition~\ref{prop:image} holds for $f([x,y]_X)$.  Suppose now that $e=e_{x,y,j}$ for some $j\in J$. Thus $x,y\in V(X_j)$. Since $X_j$ is $C$-quasiconvex in $X$, we see that for any point $u$ on $[x,y]_X$ there exists a vertex $z$ of $X_j$ with $d_X(u,z)\le C$. Hence $d_Y(u,z)\le C$ as well. We have $d_Y(x,z)\le 1$ and $d_Y(z,y)\le 1$ since $x,y,z$ are vertices of $X_j$. Hence $d_Y(f(u),x)=d_Y(u,x)\le C+1$. Thus $f([x,y]_X)$ is contained in the $d_Y$-ball of radius $C+1$ centered at $x$ in $(Y,d_Y)$, and again condition~(3) of Proposition~\ref{prop:image} holds.

Thus Proposition~\ref{prop:image}  applies and the conclusion of Proposition~\ref{prop:qc} follows.

\end{proof}

Note that the assumptions of  Proposition~\ref{prop:qc} do not require the family $(X_j)_{j\in J}$  to be ``sufficiently separated". Such a requirement is present in many versions of Proposition~\ref{prop:qc} available in the literature, although this assumption is not in fact necessary and, in particular,  Proposition~7.12 in \cite{Bow97} does not impose the ``sufficiently separated" requirement.
We have derived Proposition~\ref{prop:qc} from Proposition~\ref{prop:image}, which in turn was a consequence of Corollary~\ref{cor:contraction}. A close comparison of these statements show that the converse implication does not work, and that Corollary~\ref{cor:contraction} is a more general statement than Proposition~\ref{prop:qc}.

\section{Free factor complex and free splitting complex}

\begin{defn}[Free factor complex]\label{defn:FF_N}
Let $F_N$ be a free group of finite rank $N\ge 3$.

The \emph{free factor complex} $FF_N$ of $F_N$ is a simplicial complex
defined as follows. The
set of vertices $V(FF_N)$ of $FF_N$ is defined as the set of all
$F_N$--conjugacy classes $[A]$ of proper free factors $A$ of $F_N$.
Two distinct vertices $[A]$ and $[B]$ of $FF_N$ are joined by an edge
whenever there exist proper free factors $A,B$ of $F_N$ representing
$[A]$ and $[B]$ respectively, such that either $A\le B$ or $B\le A$.

More generally, for $k\ge 1$, a collection of $k+1$ distinct vertices
$[A_0], \dots, [A_k]$ of $FF_N$ spans a $k$--simplex in $FF_N$ if, up
to a possible re-ordering of these vertices there exist
representatives $A_i$ of $[A_i]$ such that $A_0\le A_1\le \dots \le
A_k$. 

There is a canonical action of $\Out(F_N)$ on $FF_N$ by simplicial
automorphisms: If  $\Delta=\{[A_0], \dots, [A_k]\}$ is a $k$ simplex
and $\phi\in \Out(F_N)$, then $\phi(\Delta):=\{[\phi(A_0)], \dots, [\phi(A_k)]\}$.
\end{defn}

It is not hard to check that for $N\ge 3$ the complex $FF_N$ is
connected, has dimension $N-2$ and that $FF_N/\Out(F_N)$ is compact.

\begin{defn}[Free splitting complex]\label{defn:FS_N}
Let $F_N$ be a free group of finite rank $N\ge 3$.

The \emph{free splitting complex} $FS_N$ is a simplicial complex
defined as follows. The vertex set $V(FS_N)$ consists of
equivalence classes of splittings $F_N=\pi_1(\mathbb A)$, where
$\mathbb A$ is a graph of groups with a single topological edge $e$ (possibly a loop
edge) and the trivial edge group such that the action of $F_N$ on the
Bass-Serre tree $T_\mathbb A$ is minimal (i.e. such that if $e$ is a
non-loop edge then both vertex groups in $\mathbb A$ are
nontrivial). Two such splittings $F_N=\pi_1(\mathbb A)$ and
$F_N=\pi_1(\mathbb B)$ are equivalent if there exists an
$F_N$--equivariant isometry between $T_\mathbb A$ and $T_\mathbb B$.
We denote the equivalence class of a splitting $F_N=\pi_1(\mathbb A)$
by $[\mathbb A]$.

The edges in $FS_N$ correspond to two splittings admitting a common
refinement. Thus two distinct vertices  $[\mathbb A]$ and $[\mathbb
B]$ of $FS_N$ are joined by an edge whenever there exists a splitting
$F_N=\pi_1(\mathbb D)$ such that the graph of groups $\mathbb D$ has
exactly two topological edges, both with trivial edge groups, and such that
collapsing one of these edges produces a splitting of $F_N$ representing
$[\mathbb A]$ and collapsing the other edge produces a splitting
representing $[\mathbb B]$.

More generally, for $k\ge 1$ a collection of $k+1$ distinct vertices
$[\mathbb A_0], \dots, [\mathbb A_k]$ of $FS_N$ spans a $k$--simplex in
$FS_N$ whenever there exists a splitting $F_N=\pi_1(\mathbb D)$ such
that the graph of groups $\mathbb D$ has the following properties:

(a) The underlying graph of $\mathbb D$ has exactly $k+1$ topological
edges, $e_0,\dots, e_k$.
\smallskip

(b) The edge group of each $e_i$ is trivial.

\smallskip

(c) For each $i=0,\dots, k$ collapsing all edges except for $e_i$ in
$\mathbb D$ produces a splitting of $F_N$ representing $[\mathbb
A_i]$.

\medskip

The complex $FS_N$ comes equipped with a natural action of $\Out(F_N)$
by simplicial automorphisms.

\end{defn}

Again, it is not hard to check that for $N\ge 3$ the complex $FS_N$ is
finite-dimensional, connected and that the quotient $FS_N/F_N$ is
compact.

We denote the barycentric subdivision of $FS_N$ by $FS_N'$.

\begin{defn}[Marking]
Let $N\ge 2$.
Recall that a \emph{marking} on $F_N$ is an isomorphism  
$\alpha\from  F_N\to \pi_1(\Gamma, v)$ where $\Gamma$ is a finite connected graph without
any degree-one and degree-two vertices and $v$ is a vertex of $\Gamma$. 
By abuse of notation, if $\alpha$ is specified, we will often refer to
$\Gamma$ as a marking.

Two markings $\alpha\from  F_N\to \pi_1(\Gamma, v)$ and $\alpha'\from  F_N\to
\pi_1(\Gamma', v')$ are said to be equivalent, if there exists an
$F_N$--equivariant isometry $\widetilde {(\Gamma, v)}\to \widetilde
{(\Gamma', v')}$. 
The equivalence class of a marking $\alpha\from  F_N\to \pi_1(\Gamma,v)$ is
denoted by $[\alpha]$ or, if $\alpha$ is already specified,
just $[\Gamma]$.
\end{defn}

\begin{conv}[Barycenters]\label{conv:bar}
Note that for $N\ge 3$ any marking $\alpha\from  F_N\to \pi_1(\Gamma)$ corresponds
to a simplex $\Delta_\alpha$ in $FS_N$, as follows. We can view $\Gamma$ as a
graph of groups by assigning trivial groups to all the vertices and
edges of $\Gamma$. Then the vertices of $\Delta_\alpha$ correspond to the
(topological) edges of $\Gamma$ and come from choosing an edge $e$ of
$\Gamma$ and collapsing all the other edges of $\Gamma$.
It is easy to see that $\Delta_\alpha$ depends only on the equivalence
class $[\alpha]$ of the marking $\alpha$.

We denote the vertex of $FS_N'$ given by the barycenter of
$\Delta_\alpha$ by $z(\alpha)$ or, if it is more convenient, by
$z(\Gamma)$. Note that if $[\alpha]=[\beta]$ then   $z(\alpha)=z(\beta)$. 
We will sometimes refer to a marking $\Gamma$ as a vertex of $FS_N'$; when that happens, we always mean the vertex $z(\Gamma)$.
\end{conv}

\section{The free bases graph}

If $\Gamma$ is a graph (i.e. a one-dimensional CW-complex), then any
topological edge (i.e. a closed 1-cell) of $\Gamma$ is homeomorphic to
either $[0,1]$ or to $\mathbb S^1$ and
thus admits exactly two orientations. An \emph{oriented edge} of
$\Gamma$ is a topological edge together with a choice of an
orientation on this edge. If $e$ is an oriented edge of $\Gamma$, we denote by
$e^{-1}$ the oriented edge obtained by changing the orientation on $e$
to the opposite one. Note that $(e^{-1})^{-1}=e$ for any oriented edge
$e$. For an oriented edge $e$ we denote the initial vertex of $e$ by
$o(e)$ and the terminal vertex of $e$ by $t(e)$. Then $o(e^{-1})=t(e)$
and $t(e^{-1})=o(e)$. We will denote by $E\Gamma$ the set of oriented
edges of $\Gamma$ and by $V\Gamma$ the set of vertices of $\Gamma$.

Let $N\ge 2$. We denote by $W_N$ the graph with a single vertex $v_0$ and
$N$ distinct oriented loop-edges $e_1,\dots, e_N$.

\begin{defn}[$\mathcal A$--rose]
Let $\mathcal A=\{a_1,\dots, a_N\}$ be a free basis of $F_N$. Define the
\emph{$\mathcal A$--rose} $R_\mathcal A$ as the marking
$\alpha_\mathcal A\from  F_N\to \pi_1(W_N, v_0)$
where $\alpha_\mathcal A$ sends $a_i$ to the loop at $v_0$ in $\Gamma$
corresponding to $e_i$, traversed in the direction given by
the orientation of $e_i$.
\end{defn}

\begin{defn}[Free bases graph]\label{defn:FB_N}
Let $N\ge 3$. The \emph{free bases graph} $FB_N$ of $F_N$ is a simple graph defined as follows. The vertex
set $V(FB_N)$ consists of equivalence classes free bases $\mathcal A$
of $F_N$. Two free bases $\mathcal A$ and $\mathcal B$ of $F_N$ are
considered equivalent if the Cayley graphs
$T_\mathcal A$ and $T_\mathcal B$ of
$F_N$ with respect to $\mathcal A$ and $\mathcal B$ are
$F_N$--equivariantly isometric. We denote the equivalence class of a
free basis $\mathcal A$ of $F_N$ by $[\mathcal A]$.

Note that for free bases $\mathcal A=\{a_1,\dots, a_N\}$ and $\mathcal
B=\{b_1,\dots, b_N\}$ of $F_N$ we have  $[\mathcal A]=[\mathcal B]$ if and only if there exist a
permutation $\sigma\in S_N$, an element $g\in F_N$ and numbers
$\epsilon_i\in \{1, -1\}$ (where $i=1,\dots, N$) such that
\[
b_i= g^{-1} a_{\sigma(i)}^{\epsilon_i} g
\] 
for $i=1,\dots, N$. Thus $[\mathcal A]=[\mathcal B]$ if and only if
the roses $R_\mathcal A$ and $R_\mathcal B$ are equivalent as markings.
Note also that for any free basis $\mathcal A$ of
$F_N$ and any $g\in F_N$ we have $[g^{-1}\mathcal Ag]=[\mathcal A]$.

The edges in  $FB_N$ are defined as follows. Let  $[\mathcal A]$ and
$[\mathcal B]$ be two distinct vertices of $FB_N$. These vertices are
adjacent in $FB_N$  whenever there exists $a\in \mathcal A$ such that some element $b\in \mathcal B$ is conjugate to $a$ or
$a^{-1}$. Thus two distinct vertices $v_1,v_2$ of $FB_N$ are adjacent if and only if there exist free bases $\mathcal A$ and $\mathcal B$ representing $v_1$, $v_2$ accordingly such that $\mathcal A\cap \mathcal B\ne \emptyset$.

The graph $FB_N$ comes equipped with a natural $\Out(F_N)$ action by simplicial automorphisms.

\end{defn}

\begin{prop} \label{prop:FB}

Let $N\ge 3$. Then:

\begin{enumerate}
\item The graph $FB_N$ is connected.

\item For each vertex $v=[\mathcal A]$ of $FB_N$ choose some
  $a_v\in \mathcal A$. Consider the map 
  \[
  h:V(FB_N) \to V(FF_N)
  \]
  defined as $h(v)=[\langle a_v\rangle]$ for every vertex $v$ of
  $FB_N$. Extend $h$ to a graph-map 
  \[
  h\from  FB_N\to FF_N
  \] 
  by sending every edge $e$ of $FB_N$ with endpoints $v,v'$ to a geodesic path
  $[h(v),h(v')]$ in $FF_N^{(1)}$. Then:

(a) The map $h$ is a quasi-isometry. In particular, the
  complexes $FB_N$ and $FF_N$ are quasi-isometric.

(b) The set $h(V(FB_N))$ is $3$--dense in $FF_N^{(1)}$

\end{enumerate}

\end{prop}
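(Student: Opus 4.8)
The plan is to establish the three assertions of Proposition~\ref{prop:FB} in order, with the quasi-isometry statement~(2)(a) being the technical heart.

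\textbf{Connectivity and density.} First I would prove~(1) and note that~(2)(b) essentially comes along for the ride. Connectivity of $FB_N$ should follow from the classical fact that the group $\Out(F_N)$ (or rather $\Aut(F_N)$ acting on free bases) is generated by Nielsen transformations together with permutations and inversions of basis elements: a single Nielsen transformation $a_i \mapsto a_i a_j$ (with the other basis elements fixed) changes the equivalence class $[\mathcal A]$ to an adjacent vertex, since the new basis still contains $a_j$ (and all $a_k$ with $k\neq i$), so $\mathcal A\cap \mathcal A' \neq \emptyset$. Chaining such moves connects any two free bases. For~(2)(b), given any proper free factor $A$ of $F_N$, pick a free basis $\mathcal A$ of $F_N$ that contains a free basis of $A$; then $\langle a\rangle \le A$ for $a\in \mathcal A\cap A$, so $[\langle a\rangle]$ and $[A]$ are at distance $\le 1$ in $FF_N$. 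Since every vertex $[\mathcal A]$ of $FB_N$ has $h([\mathcal A]) = [\langle a_{\mathcal A}\rangle]$ for the chosen $a_{\mathcal A}\in\mathcal A$, and the set of such cyclic-free-factor classes obtained as we vary $\mathcal A$ is all primitive conjugacy classes, a short argument (a primitive element extends to a basis, which can be modified to contain the designated generator) gives $3$--density; being careful about the fixed choices $a_v$ may cost a bounded additive constant, which is why the bound is $3$ rather than $1$.

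\textbf{The quasi-isometry.} For~(2)(a) I would show $h$ is a quasi-isometric embedding and quasi-surjective (the latter is~(2)(b)). Quasi-surjectivity plus the Lipschitz property give the rest, so the crux is a lower bound $d_{FF_N}(h(v), h(w)) \ge \frac{1}{\lambda} d_{FB_N}(v,w) - c$. The map $h$ is coarsely Lipschitz: if $[\mathcal A]$ and $[\mathcal B]$ are adjacent in $FB_N$, so $a\in\mathcal A\cap\mathcal B$ up to conjugacy, then both $h([\mathcal A])=[\langle a_{\mathcal A}\rangle]$ and $\langle a\rangle$ lie in the rank-$N$ group via bases containing them; in fact $\langle a_{\mathcal A}\rangle$, $\langle a\rangle$, $\langle b_{\mathcal B}\rangle$ are pairwise joinable by a chain of free factors of bounded length (each is contained in a common corank-$1$ free factor of $F_N$ determined by $\mathcal A$ resp.\ $\mathcal B$), giving $d_{FF_N}(h([\mathcal A]), h([\mathcal B])) \le $ const. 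For the lower bound, the natural route is to go through the intersection graph $I_N$ mentioned in the introduction, or directly: construct a coarse inverse $k\from V(FF_N)\to V(FB_N)$ by sending $[A]$ to the class of a free basis of $F_N$ extending a basis of $A$, and check that if $[A]\le[B]$ (an edge of $FF_N$) then $k([A])$ and $k([B])$ can be chosen adjacent (or at bounded distance) in $FB_N$ — this uses that a basis of $A$ extends to a basis of $B$ extends to a basis of $F_N$, so one can arrange a common basis element. Then $k$ is coarsely Lipschitz, and $h\circ k$ and $k\circ h$ are within bounded distance of the identity, which forces $h$ to be a quasi-isometry.

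\textbf{Main obstacle.} I expect the genuine difficulty to be the lower distance bound, i.e.\ showing that $h$ does not collapse distances — equivalently, constructing the coarse inverse $k$ and verifying it is coarsely Lipschitz. Checking that an edge of $FF_N$ maps under $k$ to a bounded-length path in $FB_N$ requires the nontrivial fact that if $A\le B$ are proper free factors then a free basis of $B$ can be extended to a free basis of $F_N$ and, conversely, one can pass between bases of $F_N$ adapted to $A$ and to $B$ by a bounded number of "elementary" moves each of which shares a basis element — this is where the bipartite intersection-graph picture, or a direct appeal to the structure of free bases of free factors, is needed. The well-definedness up to the choices $a_v$ and the choices of extending bases only affects the additive constants, so it is routine once the core estimate is in place. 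All other verifications (that $h$ is a graph-map, $1$-Lipschitz on edges by construction of the geodesic extension, etc.) are immediate from the definitions.
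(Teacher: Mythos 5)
Your proposal is correct and follows essentially the same route as the paper: a direct Lipschitz bound for $h$ via short chains of nested free factors, a coarse inverse $q$ sending $[K]$ to a basis of $F_N$ extending a basis of $K$ (with the independence of the fixed choices handled, as you anticipate, by inserting an intermediate basis sharing an element with each, which is why the paper gets $2$--Lipschitz rather than adjacency), and $3$--density via the chain $[K]$, $[\langle a_1\rangle]$, $[\langle a_1,a_i\rangle]$, $[\langle a_i\rangle]$. Your explicit Nielsen-transformation argument for connectivity is a nice touch; the paper leaves connectivity implicit in the quasi-inverse construction.
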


\begin{proof}
First we will show that $h$ is $4$--Lipschitz.
Since $h\from FB_N\to FF_N^{(1)}$ is a graph-map, it suffices to check that
for any two adjacent vertices $v, v'$
of $FB_B$ we have $d_{FF_N}(h(v), h(v'))\le 4$.

Let $v=[\mathcal A]$ and $v'=[\mathcal B]$ be two adjacent vertices of
$FB_N$. Hence we may choose free bases $\mathcal A$ representing $v$
and $\mathcal B$ representing $v'$ such that $\mathcal A\cap \mathcal
B\ne \emptyset$. Up to re-ordering these bases, we may assume that $\mathcal
A=\{a_1,\dots, a_N\}$, $\mathcal B=\{b_1,\dots, b_N\}$ and that $a_1=b_1$.
Then $a_v=a_i$ and $a_{v'}=b_j$ for some $1\le i,j\le N$ and
thus, by definition of $h$, we have $h(v)=[\langle a_i\rangle]$,
$h(v')=[\langle b_j\rangle]$.

We will assume that $i>1$ and $j>1$ as the cases where $i=1$ or $j=1$
are easier. Then in $FF_N$ we have 
\[
d_{FF_N}\Big([\langle a_i\rangle], [\langle a_i, a_1\rangle]\Big)=
d_{FF_N}\Big([\langle a_i, a_1\rangle], [\langle a_1\rangle]\Big)=1
\]
and
\[
d_{FF_N}\Big([\langle b_j\rangle], [\langle b_j, b_1\rangle]\Big)=
d_{FF_N}\Big([\langle b_j, b_1\rangle], [\langle b_1\rangle]\Big)=1.
\]
Since $a_1=b_1$, by the triangle inequality we conclude that 
\[
d_{FF_N}(h(v),h(v'))=d_{FF_N}([\langle a_i\rangle], [\langle b_j\rangle])\le 4.
\]
Thus the map $h$ is $4$--Lipschitz, as claimed.

To show that $h$ is a quasi-isometry we will construct a
``quasi-inverse'', that is a Lipschitz map $q\from FF_N^{(1)}\to FB_N$
such that there exists $C\ge 0$ with the property that for every
vertex $v$ of $FB_N$ $d_{FB_N}(v, q(h(v)))\le C$ and that for every vertex
$u$ of $FF_N$, $d_{FN_N}(u,h(q(u)))\le C$.

We define $q$ on $V(FF_N)$ and then extend $q$ to edges in a
natural way, by sending every edge to a geodesic joining the images of
its end-vertices.

Let $u=[K]$ be an arbitrary vertex of $FF_N$ (so that $K$ is a proper
free factor of $F_N$). We choose a free basis $\mathcal B_K$ of $K$
and then a free basis $\mathcal A_K$ of $F_N$ such that $\mathcal
B_K\subseteq \mathcal A_K$. Put $q(u)=[\mathcal A_K]$.

First we check that $q$ is Lipschitz. Let $u=[K]$ and $u'=[K']$ be
adjacent vertices of $FF_N$, where $K, K'$ are proper free factors of
$F_N$. We may assume that $K\le K'$ is a proper free factor of
$K'$. Since $K'\ne F_N$, there exists $t\in \mathcal A_{K'}\setminus
\mathcal B_{K'}$. Since $K$ is a free factor of $K'$, we can find a
free basis $\mathcal A$ of $F_N$ such that $t\in \mathcal A$ and
$\mathcal B_K\subseteq \mathcal A$. Since $t\in \mathcal A_{K'}\cap
\mathcal A$, we have $d([\mathcal
A_{K'}], [\mathcal A])\le 1$ in $FB_N$.  Since $\mathcal B_K\subset
\mathcal A\cap \mathcal A_K$, it follows that $d([\mathcal
A_{K}], [\mathcal A])\le 1$ in $FB_N$. Therefore 
\[
d_{FB_N}\big(q(u), q(u')\big)=
d_{FB_N}\big([\mathcal A_{K}],[\mathcal A_{K'}]\big)\le 2.
\]
Hence $q$ is $2$--Lipschitz.

For a vertex $v=[\mathcal A]$ of $FB_N$ let us now estimate 
$d_{FB_N}(v,q(h(v)))$. We have $h(v)=[\langle a_v\rangle]$ for some $a_v\in
\mathcal A$. The group $K=\langle a_v\rangle$ is infinite cyclic (that
is free of rank $1$). Therefore this group has only two possible free
bases, $\{a_v\}$ and $\{a_v^{-1}\}$. We will assume that $\mathcal
B_K=\{a_v\}$ as the case  $\mathcal B_K=\{a_v^{-1}\}$ is similar. Then, 
by definition, $\mathcal A_K$ is a
free basis of $F_N$ containing $a_v$ and $q(h(v))=q([K])=[\mathcal A_K]$.
Thus $a_v\in \mathcal A\cap \mathcal A_K$ and hence 
$d_{FB_N}(v, q(h(v)))\le 1$ in $FB_N$.

Now let $u=[K]$ be an arbitrary vertex of $FF_N$. We need to estimate
$d_{FF_N}(u, h(q(u)))$. By definition, $v:=h(u)=[\mathcal A_K]$ where 
$\mathcal A_K$ is a free basis of $F_N$ containing as a (proper) subset a free
basis $\mathcal B_K$ of $K$. Then $a_v\in \mathcal A_K$ and
$h(q(u))=h(v)=[\langle a_v\rangle]$. Choose an element $b\in \mathcal
B_K$. It may happen that $b=a_v$, but in any case $K':=\langle
b,a_v\rangle$ is a proper free factor of $F_N$. Then 
\[
d_{FF_N}\big([K], [\langle b \rangle]\big)\le 1, \quad 
d_{FF_N}\big([\langle b \rangle], [K']\big)\le 1
\quad\text{and}\quad 
d_{FF_N}\big([K'], [\langle a_v\rangle]\big)\le1.
\] 
Therefore
\[
d_{FF_N}\big(u, h(q(u)\big)=d_{FF_N}\big([K],[\langle a_v\rangle]\big)\le 3.
\]
Thus indeed $q$ is a quasi-inverse for $h$, and hence $h$ is a
quasi-isometry, as required.

We next show that $h(V(FB_N))$ is $3$--dense in $FF_N^{(1)}$.
Indeed, let $K\le F_N$ be an arbitrary proper free factor of
$F_N$. Let $a_1,\dots, a_m$ (where $1\le m<N$) be a free basis of $K$
and choose $a_{m+1}\dots, a_N$ such that $\mathcal A=\{a_1,\dots, a_N\}$ is 
a free basis of $F_N$. Then $h([\mathcal A])=[\langle
a_i\rangle]$ for some $1\le i\le N$. In $FF_N$ we have 
\[
d_{FF_N}\big([K], [\langle a_1\rangle]\big)\le 1, \quad 
d_{FF_N}\big([\langle a_1\rangle], [\langle a_1, a_i\rangle]\big)\le 1
\quad\text{and}\quad 
d_{FF_N}\big([\langle a_1, a_i\rangle], [\langle a_i\rangle]\big)\le 1.
\] 
Since $h([\mathcal A])=[\langle
a_i\rangle]$, it follows that $d_{FF_N}([K], h(v))\le 3$. Thus indeed
$h(V(FB_N))$ is $3$--dense in $FF_N^{(1)}$, as claimed.
\end{proof}

\begin{defn}[Free basis defined by a marking]\label{defn:BT}
If $\alpha\from  F_N\to \pi_1(\Gamma,v)$ is a marking, and $T\subseteq
\Gamma$ is a maximal tree in $\Gamma$, there is a naturally associated
free basis $\mathcal B(\alpha, T)$ (which we will also sometimes
denote  $\mathcal B(\Gamma, T)$) of $F_N$. Namely, in this case
$\Gamma-T$ consists of $N$ topological edges. Choose oriented edges
$e_1,\dots, e_N\in E(\Gamma-T)$ so that $E(\Gamma-T)=\{e_1^{\pm 1},
\dots, e_N^{\pm 1}\}$. For $j=1,\dots, N$ put 
\[
\gamma_j=[v,o(e_j)]_T \, e_j \, [t(e_j),v]_T,
\] 
where $[u,u']_T$ denotes the (unique) geodesic in the tree $T$ from $u$ to $u'$ for $u,u'\in V(\Gamma)=V(T)$.

Then $\gamma_1,\dots, \gamma_N$ is a free basis of
$\pi_1(\Gamma, v)$. Put $\mathcal B(\alpha,
T):=\{\alpha^{-1}(\gamma_1), \dots , \alpha^{-1}(\gamma_1)\}$.
\end{defn}

\begin{rem}\label{rem:BT}
One can show that there is a constant $C=C(N)>0$ such that if $\alpha\from  F_N\to \pi_1(\Gamma,v)$ and
$\alpha'\from  F_N\to \pi_1(\Gamma',v')$ are equivalent markings and
$T\subseteq \Gamma$, $T'\subseteq \Gamma'$ are maximal trees, then
\[
d_{FB_N}\Big([\mathcal B(\alpha, T)], [\mathcal B(\alpha', T')]\Big)\le C.
\]
This can be shown, for example, using the quasi-isometry $q:FB_N\to FF_N$ constructed in Proposition~\ref{prop:FB}. Thus the definitions imply that if $T$ is a maximal tree in $\Gamma$ and $e$ is an edge of $\Gamma\setminus T$, then $q([\mathcal B(\alpha, T)])$ is a bounded distance away in $FF_N$ from the free factor of $F_N$ corresponding to any of the vertex groups in the graph of groups $\Gamma_e$ obtained by collapsing $\Gamma\setminus e$. On the other hand, for any two edges $e_1, e_2$ of $\Gamma$ the free splittings $\Gamma_{e_1}$ and $\Gamma_{e_2}$ are adjacent vertices of $FS_N$ and therefore (e.g. using the Lipschitz map $\tau:FS_N\to FF_N$ from the Introduction),  any two vertex groups $A_1$ and $A_2$ from these splittings are bounded distance away in $FF_N$. 
\end{rem}

\section{$\mathcal A$--graphs and Stallings folds}

We briefly discuss here the language and machinery of Stallings
foldings, introduced by Stallings in a seminal paper~\cite{St}. We
refer the reader to \cite{KM} for detailed background on the topic.

If $\Gamma$ is a finite connected non-contractible graph, we denote by
$\Core(\Gamma)$ the unique minimal subgraph of $\Gamma$ such that the
inclusion $\Core(\Gamma)\subseteq \Gamma$ is a homotopy
equivalence. Thus $\Core(\Gamma)$ carries $\pi_1(\Gamma)$ and we can
obtain $\Gamma$ from $\Core(\Gamma)$ by attaching finitely many trees.

\begin{defn}[$\mathcal A$--graph]
Let $\mathcal A$ be a free basis of $F_N$ and let $R_\mathcal A$ be
the corresponding rose marking.
An \emph{$\mathcal A$--graph} is a graph $\Gamma$ with a
labelling function $\mu\from  E\Gamma\to \mathcal A^{\pm 1}$ (where
$E\Gamma$ is the set of oriented edges of $\Gamma$) such that for
every oriented edge $e\in E\Gamma$ we have
$\mu(e^{-1})=\mu(e)^{-1}$.
  
Note that there is an obvious way to view the rose $R_\mathcal A$ as an 
$\mathcal A$--graph. 
 Any $\mathcal A$--graph $\Gamma$ comes equipped with a canonical
label-preserving graph-map $p\from  \Gamma\to R_\mathcal A$ which sends all
vertices of $\Gamma$ to the (unique) vertex of $R_\mathcal A$ and
which sends every oriented edge of $\Gamma$ to the oriented edge of
the rose $R_\mathcal A$ with the same label. We call $p$ the
\emph{natural projection}.

Let $\Gamma$ be a finite connected $\mathcal A$-graph containing at least one vertex of degree $\ge 3$.
Following Handel-Mosher~\cite{HM}, we call vertices of $\Gamma$ that have degree $\ge 3$ \emph{natural vertices}. The complement of the set of natural vertices in $\Gamma$ consists of a disjoint union of intervals whose closures, again following~\cite{HM}, we call \emph{natural edges}.

\end{defn}

Recall that in the definition of a marking on $F_N$ the graph
appearing in that definition had no degree-one and degree-two vertices.

\begin{rem}\label{rem:A}
Suppose that $\Gamma$ is a connected $\mathcal A$--graph such that the
natural projection $\Gamma\to R_\mathcal A$ is a homotopy equivalence.
Then the projection $p\from  \Core(\Gamma)\to R_\mathcal A$ is a homotopy equivalence.

Then, via using the homotopy inverse of $p$ and  making inverse
subdivisions in $\Core(\Gamma)$ to erase all the degree-2 vertices, we
get an actual marking of $F_N$, $\alpha\from  F_N\to \pi_1(\overline{\Gamma})$. Here $\overline \Gamma$ is the graph obtained from
  $\Core(\Gamma)$ by doing inverse edge-subdivisions to erase all
  degree-two vertices. In this case we call $\alpha$ the \emph{marking
    associated with $\Gamma$} and denote $\alpha$ by $\alpha_\Gamma$,
  or, sometimes just by $\overline \Gamma$.
\end{rem}  

\begin{defn}[Folded graphs and Stallings folds]
Let $\Gamma$ be an $\mathcal A$--graph. We say that $\Gamma$ is
\emph{folded} if there does not exist a vertex $v$ of $\Gamma$ and two
distinct oriented edges $e_1,e_2$ with $o(e_1)=o(e_2)=v$ such that
$\mu(e_1)=\mu(e_2)$. Otherwise we say that $\Gamma$ is \emph{non-folded}.

Let $\Gamma$ be a non-folded $\mathcal A$--graph, let $e_1,e_2$ be two
distinct oriented edges of $\Gamma$ such that $o(e_1)=o(e_2)=v\in
V(\Gamma)$ and such that $\mu(e_1)=\mu(e_2)=a\in \mathcal A^{\pm
  1}$. Construct an $\mathcal A$--graph $\Gamma'$ by identifying the
edges $e_1$ and $e_2$ into a single edge $e$ with label $\mu(e)=a$.
We say that
$\Gamma'$ is obtained from $\Gamma$ by a \emph{Stallings fold}.
In this case there is also a natural label-preserving \emph{fold map}
$f\from \Gamma\to \Gamma'$. It is easy to see that the fold map $f$ is a
homotopy equivalence if and only if $t(e_1)\ne t(e_2)$ in $\Gamma$. If
$t(e_1)\ne t(e_2)$ in $\Gamma$, we say that $f$ is a \emph{type-I
  Stallings fold}. If  $t(e_1)= t(e_2)$ in $\Gamma$, we say that $f$ is a \emph{type-II
  Stallings fold}.
\end{defn}

Note that if $\Gamma$ is a finite connected $\mathcal A$--graph such
that the natural projection $\Gamma\to R_\mathcal A$ is a homotopy
equivalence, and if $\Gamma'$ is obtained from $\Gamma$ by a Stallings
fold $f$, then $f$ is necessarily a type-I fold, and hence the natural projection $\Gamma'\to R_\mathcal A$ is again a homotopy
equivalence.

\begin{defn}[Maximal fold]
Let $\Gamma$ be a non-folded finite connected $\mathcal A$--graph, let $v\in VA$ be a natural vertex, let $e_1,e_2$ be two
distinct oriented edges of $\Gamma$ such that $o(e_1)=o(e_2)=v$ and such that $\mu(e_1)=\mu(e_2)=a\in \mathcal A^{\pm
  1}$.  Let $\widehat {e_1}$ and $\widehat{e_2}$ be the natural edges in $\Gamma$ that begin with $e_1$, $e_2$ accordingly. Let $z_1,z_2$ be maximal initial segments of $\widehat {e_1}$ and $\widehat{e_2}$ such that the label $\mu(z_1)$ is graphically equal, as a word over $\mathcal A^{\pm 1}$, to the label $\mu(z_2)$. Thus $z_1$ starts with $e_1$ and $z_2$ starts with $e_2$.
Let $\Gamma'$  be obtained from $\Gamma$ by a chain of Stallings folds that fold $z_1$ and $z_2$ together.
We say that $\Gamma'$ is obtained from $\Gamma$ by a \emph{maximal fold}. Being a composition of several Stallings folds, a maximal fold also comes equipped with a \emph{fold map} $f\from \Gamma\to \Gamma'$.
\end{defn}

\begin{rem}\label{rem:BT'}
Let $\Gamma$ be a connected $\mathcal A$--graph such that
$\Gamma=\Core(\Gamma)$ and such that the natural
projection $p\from \Gamma\to R_\mathcal A$ is a homotopy equivalence. Let
$\alpha\from  F_N\to \pi_1(\Gamma,v)$ be an associated marking. Let
$T\subseteq \Gamma$ be a maximal tree. 

Recall that according to Definition~\ref{defn:BT}, we have an
associated free basis $\mathcal B(\Gamma,T)$ of $F_N$. In this case $\mathcal
B(\Gamma,T)$ can be described more explicitly as follows. Choose
oriented edges $e_1,\dots, e_N\in E(\Gamma-T)$ so that
$E(\Gamma-T)=\{e_1^{\pm 1}, \dots, e_N^{\pm 1}\}$. For each
$j=1,\dots, N$ let $w_j$ be the label (i.e. a word over $\mathcal A$)
of the path $[v, o(e_j)]_{T} e_j [t(e_j), v]_{T}$. Then  $\mathcal B(\Gamma,T)=\{w_1,\dots, w_N\}$. 
\end{rem}

We need the following technical notion which is a variant of the
notion of a foldable map from the paper of Handel-Mosher~\cite{HM}.
\begin{defn}[Foldable maps]\label{defn:foldable}
Let $\Gamma$ be a finite connected $\mathcal A$--graph such that the
natural projection $p\from \Gamma\to R_\mathcal A$ is a homotopy
equivalence and such that $\Gamma=\Core(\Gamma)$.

We say that the natural projection $p\from \Gamma\to R_\mathcal A$ is
\emph{foldable} if the following conditions hold:

\begin{enumerate}
\item If $v$ is a vertex of degree 2 in $\Gamma$ and $e_1, e_2$ are
  the two distinct edges in $\Gamma$ with $o(e_1)=o(e_2)=v$ then
  $\mu(e_1)\ne \mu(e_2)$.

\item If $\degree(v)\ge 3$ in $\Gamma$ then there exist three (oriented) edges
  $e_1,e_2,e_3$ in $\Gamma$ such that $o(e_1)=o(e_2)=o(e_3)=v$ and
  such that $\mu(e_1), \mu(e_2), \mu(e_3)$ are three distinct elements
  in $\mathcal A^{\pm 1}$.

\end{enumerate}

If the natural projection $p\from \Gamma\to R_\mathcal A$ is foldable, we
will also sometimes say that the $\mathcal A$--graph $\Gamma$ is \emph{foldable}.
\end{defn}

\begin{rem}\label{rem:fold}
Let $\Gamma$ be a foldable $\mathcal A$--graph and let $\Gamma'$ be obtained
from $\Gamma$ by a maximal fold.

(1) One can check that  $\Gamma'$ is again foldable. Note, however,  that a single Stallings fold on a foldable $\mathcal A$-graph may introduce a vertex of degree three where condition (2) of Definition~\ref{defn:foldable} fails, so that the resulting graph is not foldable. Performing maximal folds instead of single Stallings folds avoids this problem.

(2) Lemma~2.5 in \cite{HM} implies that 
\[
d_{FS_N'}\left(z(\overline{\Gamma}), z(\overline{\Gamma'})\right)\le 2.
\]  
Recall that the marking $\overline\Gamma$ was defined in Remark~\ref{rem:A}.

(3) As noted above, in  \cite{HM} Handel and Mosher introduce the
notion of a ``foldable'' $F_N$-equivariant map between trees corresponding to arbitrary
minimal splittings of $F_N$ as the fundamental group of a finite graph
of groups with trivial edge groups. They also prove the existence of
such ``foldable maps'' in that setting. The general definition and construction of
foldable maps are
fairly complicated, but in the context of $\mathcal A$-graphs
corresponding to markings on $F_N$ they become much easier. In
particular, we will only need the following basic fact that follows directly
from comparing  Definition~\ref{defn:foldable} with the Handel-Mosher
definition of a foldable map:
\smallskip

Let $\Gamma$ be a finite connected $\mathcal A$--graph such that
  the natural projection $p\from \Gamma\to R_\mathcal A$ is a homotopy
  equivalence and such that $\Gamma=\Core(\Gamma)$. Suppose that $p$ is foldable in
the sense of Definition~\ref{defn:foldable} above. Then there exists a
foldable (in the sense of Handel-Mosher) map $\widetilde \Gamma\to
\widetilde  R_\mathcal A$.

\smallskip

Handel and Mosher use foldable maps as a starting point in
constructing folding paths between vertices of $FS_N'$, and we will
need the above fact in the proof of the main result in Section~\ref{sect:main}.

\end{rem}

\section{Proof of the main result}\label{sect:main}

Before giving a proof of the main result, we illustrate the relationship and the maps between $FS_N, FS_N', FF_N$ and $FB_N$ in the following diagram, provided by the referee:

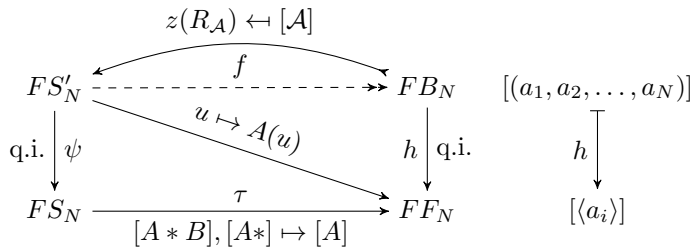
\begin{figure}

\begin{tikzpicture}
  \matrix (m) [matrix of math nodes, row sep=3em, column sep=1em]
    { FS_N'  &\hspace{3cm}& FB_N &{[(a_1,a_2,\ldots,a_N)]}  \\
      FS_N   && FF_N &{[\langle a_i\rangle]}  \\[-1cm]\\};
   \path[->>,dashed] (m-1-1) edge node[above]{$f$} (m-1-3);
   \path[->] (m-2-1) edge node[above]{$\tau$} node[below]{$[A\ast B],[A\ast]\mapsto [A]$} (m-2-3);
   \path[->] (m-1-1) edge node[right]{$\psi$} node[left]{q.i.} (m-2-1) ;
   \path[->] (m-1-3) edge node[left]{$h$} node[right]{q.i.} (m-2-3);
   \path[->] (m-1-1) edge node[above, sloped]{$u\mapsto A(u)$} (m-2-3);
   \path[>->] (m-1-3) edge [bend right=20] node[above]{$z(R_{\mathcal A})\mapsfrom [\mathcal A]$} (m-1-1);
   \path[|->] (m-1-4) edge node[left]{$h$} (m-2-4);
\end{tikzpicture}

\caption{A diagram showing the maps between the various curve complex analogs considered in this paper. Here $F_N=A\ast$ represents an HNN-extension splitting of $F_N$ with the trivial associated subgroups and with the base group $A$. The map $\psi$ is the quasi-isometry between $FS_N'$ and $FS_N$ given by the inclusion map.}\label{Fi:1}
\end{figure}

\begin{thm}\label{thm:FF_N}
Let $N\ge 3$. Then the free factor complex $FF_N$ is Gromov-hyperbolic. 
Moreover, there exists a constant $H>0$ such that for any two vertices $x,y$ 
of $FS_N$ and any geodesic $[x,y]$ in $FS_N^{(1)}$ the path $\tau([x,y])$ 
is $H$--Hausdorff close to a geodesic $[\tau(x),\tau(y)]$ in $FF_N^{(1)}$.
\end{thm}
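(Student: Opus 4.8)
The plan is to assemble Theorem~\ref{thm:FF_N} from the machinery already developed: Proposition~\ref{prop:image} (contraction of hyperbolicity along a Lipschitz map), Proposition~\ref{prop:FB} (the quasi-isometry $h\from FB_N\to FF_N$), the Handel-Mosher theorem that $FS_N'$ is hyperbolic and that folding paths are quasigeodesics, and the map $\tau$ from the Introduction. The first step is to construct the surjective Lipschitz graph-map $f\from FS_N'\to FB_N$ advertised in the Introduction. For a barycenter $z(\Gamma)$ of a simplex in $FS_N$ coming from a marking $\Gamma$, choose a maximal tree $T\subseteq \Gamma$ and set $f(z(\Gamma)) = [\mathcal B(\Gamma,T)]$ (Definition~\ref{defn:BT}); for a vertex of $FS_N$ (a single-edge splitting) make a compatible choice, e.g. by first refining to a rose. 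Using Remark~\ref{rem:BT} one checks $f$ is well-defined up to bounded error and Lipschitz, and by taking $S\subseteq V(FS_N')$ to be the set of barycenters of $N$-roses $R_{\mathcal A}$, the restriction $f|_S$ is (essentially) the identification $S = V(FB_N)$, so $f(S) = V(FB_N)$ and $S$ is $D$-dense in $FS_N'$ for some $D = D(N)$.

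The second, and technically central, step is to verify hypothesis (3) of Proposition~\ref{prop:image}: if $x,y\in S$ correspond to roses $R_{\mathcal A}, R_{\mathcal B}$ with $d_{FB_N}(f(x),f(y))\le 1$ — i.e. $\mathcal A\cap\mathcal B\ne\emptyset$ after suitable representatives — then $\diam_{FB_N} f([x,y])$ is bounded. Here I would not use the full Handel-Mosher folding path for arbitrary vertices, but the much simpler picture available for roses: form an $\mathcal A$-graph $\Gamma_0$ realizing $\mathcal B$ over the basis $\mathcal A$ (a foldable graph in the sense of Definition~\ref{defn:foldable}, possibly after a minor modification), and run a sequence of maximal folds $\Gamma_0\to\Gamma_1\to\cdots\to\Gamma_m = R_{\mathcal A}$. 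By Remark~\ref{rem:fold}(1) each $\Gamma_i$ stays foldable, by Remark~\ref{rem:fold}(2) consecutive barycenters $z(\overline{\Gamma_i})$ move by at most $2$ in $FS_N'$, and by the Handel-Mosher results this folding sequence is a quasigeodesic in $FS_N'$, hence uniformly Hausdorff-close to $[x,y]$. It then suffices to bound $d_{FB_N}\big(f(z(\overline{\Gamma_i})), f(x)\big)$ independently of $i$: the free basis $\mathcal B(\Gamma_i, T_i)$ of $F_N$ (Remark~\ref{rem:BT'}) always contains an element whose label word over $\mathcal A$ is short — indeed, because $\mathcal A$ and $\mathcal B$ share a basis element $a_1 = b_1$, there is throughout the folding a natural edge (or a short loop) labeled by the single letter $a_1$, so $a_1$ (or a conjugate) appears in $\mathcal B(\Gamma_i,T_i)$, forcing $d_{FB_N}(f(z(\overline{\Gamma_i})), [\mathcal A])\le$ const and $\le$ const from $[\mathcal B]$ as well. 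I expect this is where the real work lies: controlling which short elements survive in $\mathcal B(\Gamma_i,T_i)$ under maximal folds, and making the "short loop labeled $a_1$ persists" claim precise (it may be cleanest to choose $\Gamma_0$ so that $a_1$ is carried by an embedded loop-edge that no fold ever touches). Granting this, hypothesis (3) of Proposition~\ref{prop:image} holds with $f$ restricted to $FS_N'$.

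With all hypotheses of Proposition~\ref{prop:image} verified for $X = FS_N'$ (which is $\delta_0$-hyperbolic by Handel-Mosher), $Y = FB_N$, the Lipschitz map $f$, and the $D$-dense set $S$, we conclude that $FB_N$ is $\delta_1$-hyperbolic and that for any $x,y\in V(FS_N')$ and any geodesic $[x,y]$ in $FS_N'$, the path $f([x,y])$ is $H_0$-Hausdorff close to any geodesic $[f(x),f(y)]$ in $FB_N$. Composing with the quasi-isometry $h\from FB_N\to FF_N$ of Proposition~\ref{prop:FB}(2a), which sends quasigeodesics to quasigeodesics and (by hyperbolicity of $FF_N$, now established, and the Morse lemma) quasigeodesics to paths Hausdorff-close to geodesics, transports this statement to $FF_N$: $FF_N$ is Gromov-hyperbolic. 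Finally, to get the statement for $\tau$ and vertices of $FS_N$ rather than $FS_N'$, I would observe that $h\circ f$ and $\tau\circ\psi$ (where $\psi\from FS_N'\to FS_N$ is the inclusion q.i.) differ by a bounded amount on $V(FS_N')$ — both send a marking-barycenter to a bounded neighborhood of the free-factor class of a vertex group of the associated splitting, cf.\ the diagram in Figure~\ref{Fi:1} and Remark~\ref{rem:BT}. Hence for $x,y\in V(FS_N)$ a geodesic $[x,y]$ in $FS_N^{(1)}$ is uniformly Hausdorff-close (in $FS_N'$, after barycentric subdivision) to a geodesic between corresponding vertices, its $f$-image is Hausdorff-close to an $FB_N$-geodesic, its $h$-image is Hausdorff-close to an $FF_N$-geodesic, and bounded-distance replacement of $h\circ f$ by $\tau$ yields exactly the claimed $H$-Hausdorff closeness of $\tau([x,y])$ to a geodesic $[\tau(x),\tau(y)]$, with $H = H(N)$. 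The one point requiring care in this last paragraph is quantifying "$h$ sends quasigeodesics to paths Hausdorff-close to geodesics" uniformly — but this is the standard Morse lemma together with the explicit quasi-isometry constants from Proposition~\ref{prop:FB}, so it is routine.
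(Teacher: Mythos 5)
Your proposal is correct and follows essentially the same route as the paper: the map $f\from FS_N'\to FB_N$ with $S$ the set of rose barycenters, verification of hypothesis~(3) of Proposition~\ref{prop:image} via the maximal-fold sequence from an $\mathcal A$--graph realizing $\mathcal B$ and the persistence of the loop-edge labeled $a_1=b_1$ at the base-vertex, then Proposition~\ref{prop:image} followed by the quasi-isometry $h$ and the bounded-distance comparison of $h\circ f$ with $\tau$. The one point you flag as needing care (that the $a_1$-loop survives every maximal fold) is exactly the crucial feature the paper also relies on, so no gap.
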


\begin{proof}
Recall that $FS_N'$ is the barycentric subdivision of the free
splitting graph $FS_N$, so that the inclusion map $FS_N\subseteq FS_N'$ is a quasi-isometry.

Recall also that for any free basis $\mathcal A$ of
$F_N$ the rose $R_\mathcal A$ defines an $(N-1)$--simplex in $FS_N$ (via the canonical marking $F_N\to \pi_1(R_\mathcal A)$ sending the elements of $\mathcal A$ to the corresponding loop-edges of $R_\mathcal A$) and
that, as in Convention~\ref{conv:bar}, $z(R_\mathcal A)\in V(FS_N')$
is the barycenter of that simplex. Note that by definition, if
$[\mathcal A]=[\mathcal B]$ then $z(R_\mathcal A)=z(R_\mathcal B)$. Put 
\[
S=\{z(R_\mathcal A)\, |\, \mathcal A \text{ is a free basis of } F_N\}.
\]
Thus $S\subseteq V(FS_N')$ and we may think of  $S$ as a copy of $V(FB_N)$ in $V(FS_N')$.

For every $x,y\in S$  let $g_{x,y}$ be the path from $x$ to $y$ in
$FS_N'$ given by the Handel-Mosher folding line~\cite{HM}. Recall that, as proved in~\cite{HM}, $FS_N'$ is Gromov-hyperbolic and $g_{x,y}$ is a re-parameterized uniform quasigeodesic. Hence $g_{x,y}$ is uniformly Hausdorff close to any geodesic $[x,y]$ in $FS_N'$.

Consider the following map $f\from  V(FS_N')\to V(FB_N)$. For
every vertex $u$ of $FS_N'$, which may be viewed as a splitting of
$F_N$ as the fundamental group of a graph of groups with trivial
edge-groups, choose an edge $e$ of that splitting, collapse the rest of
$u$ to a single-edge splitting corresponding to $u$ and let $A(u)$ be
a vertex group of that collapsed splitting. Thus $A(u)$ is a proper free
factor of $F_N$ and hence $[A(u)]$ is a vertex of $FF_N$. Then choose
a vertex $v$ of $FB_N$ with $d(h(v), [A(u)])\le 3$ (such $v$ exists by Proposition~\ref{prop:FB}).
Put $f(u):=v$.
We can make the above choices to make sure that for every free basis $\mathcal A$ of $F_N$ we
have $f(z(R_\mathcal A))=[\mathcal A]$ (note that $d_{FF_N}(h([\mathcal A]), A(z(R_\mathcal A)))\le 2$). With the above mentioned
identification of $S$ and $V(FB_N)$ we may in fact informally think
that $f|_S=Id_S$. Moreover, if $\Gamma$ is a foldable $\mathcal
A$-graph (which therefore defines a marking $\overline \Gamma$) and
$T\subseteq \Gamma$ is a maximal tree, then we have an associated free
basis $\mathcal B(\Gamma,T)$ described in Remark~\ref{rem:BT'} above.
One can check that
$d(f(\overline\Gamma), [\mathcal B(\Gamma,T)])\le B$ for some constant
$B=B(N)>0$ independent of $\mathcal A, \Gamma, T$.

We have defined a map $f\from  V(FS_N')\to V(FB_N)$. We then extend
this map to a graph-map $f\from FS_N'\to FB_N$ by sending an
arbitrary edge
$e$ of $FS_N'$ with endpoints $u_1,u_2$ to a geodesic edge-path
$[f(u_1),f(u_2)]$ in $FB_N$. The graph-map $f \from FS_N'\to FB_N$ is 
$L$--Lipschitz for some $L \ge 0$, since $\tau$ is Lipschitz and the maps $V(FS_N')\to FF_N^{(1)}$, given by $u\mapsto [A(u)]$ and $u\mapsto \tau(u)$, are bounded distance away from each other.

We claim that all the assumptions of Proposition~\ref{prop:image} are
satisfied for the map $f\from FS_N'\to FB_N$ and the set $S$.

Condition (1) of Proposition~\ref{prop:image} holds, since by assumption $f(S)=V(FB_N)$. Also, as noted above, $f\from FS_N'\to FB_N$
is $L$--Lipschitz, and it is easy to see that $S$ is $D$--dense in $FS_N'$,
 for some $D>0$. Our task is to verify condition (3) of 
 Proposition~\ref{prop:image}.

If $x',y'\in V(FB_N)$ have $d(x',y')\le 1$ then there exist free
bases $\mathcal A, \mathcal B$ of $F_N$ such that $x'=[\mathcal B]$, $y'=[\mathcal A]$ and such that there
exists $a\in \mathcal A\cap \mathcal B$. Without loss of generality,
we may assume that $\mathcal A=\{a_1,\dots, a_N\}$, $\mathcal
B=\{b_1,\dots, b_N\}$ and that $a_1=b_1=a$. 
Put $x=z(R_\mathcal B)$ and $y=z(\mathcal A)$, so that $f(x)=x'$ and
$f(y)=y'$.

In \cite{HM} Handel-Mosher~\cite{HM}, given 
any ordered pair of vertices $x,y$ of $V(FS_N')$,
construct an edge-path $g_{x,y}$ from $x$ to $y$ in $FS_N'$, which we
will sometimes call the \emph{Handel-Mosher folding path}. 
The general definition of $g_{x,y}$ in~\cite{HM} is fairly
complicated. However, we  only need to use this definition for the case
where $x,y\in S$, in which case it becomes much simpler, and which we will now describe in greater detail for the vertices $x=z(R_\mathcal B)$ and $y=z(\mathcal A)$ defined above.

Consider an $\mathcal A$--graph $\Gamma_0$ which is a wedge of $N$
simple loops at a common base-vertex $v_0$, where the $i$-th loop is
labeled by the freely reduced word over $\mathcal A$ that is equal to
$b_i$ in $F_N$. Note that the first loop is just a loop-edge labelled
by $a_1$, since by assumption $b_1=a_1$. The natural projection $p\from \Gamma_0\to
R_\mathcal A$ is a homotopy equivalence and we also have
$\Gamma_0=\Core(\Gamma_0)$. Condition~(1) of
Definition~\ref{defn:foldable} holds for $\Gamma_0$ by
construction. However, $p\from \Gamma_0\to
R_\mathcal A$ is not necessarily foldable since Condition~(2) of
Definition~\ref{defn:foldable} may fail. This happens exactly when
there exists $\epsilon\in \{1,-1\}$ such that for all $i=2,\dots, N$
the freely reduced word over $\mathcal A$ representing $b_i$ begins
with $a_1^{\epsilon}$ and ends with $a_1^{-\epsilon}$. However,  after possibly replacing $\mathcal B$ by an
equivalent free basis of the form $a_1^m \mathcal B a_1^{-m}$, for the
graph $\Gamma_0$ defined as above the natural projection $p\from \Gamma_0\to
R_\mathcal A$ is foldable in the sense of
Definition~\ref{defn:foldable}. Note that conjugation by $a_1^m$
fixes the element $b_1=a_1$, so that even after the above modification
of $\mathcal B$ it will still be true that $\Gamma_0$ contains a
loop-edge at $v_0$ with label $a_1$.

As noted in Remark~\ref{rem:fold} above, as the initial input for
constructing $g_{x,y}$, Handel and Mosher need a ``foldable'' (in the
sense of~\cite{HM}) $F_N$--equivariant map
$\widetilde{R_\mathcal B} \to \widetilde{R_\mathcal A}$. Again, as observed in Remark~\ref{rem:fold}, such a map
exists since we have arranged for the $\mathcal A$-graph $\Gamma_0$ to
be foldable in the sense of Definition~\ref{defn:foldable}.

Note that by construction, the
marking $\overline{\Gamma_0}$ corresponding to $\Gamma_0$ is exactly
the vertex $x=z(R_\mathcal B)$ of $FS_N'$.

The remainder of the Handel-Mosher construction of $g_{x,y}$ in this
case works as follows. Since $p\from \Gamma_0\to
R_\mathcal A$ is a homotopy equivalence, there exists a finite
sequence of $\mathcal A$--graphs 
\[
\Gamma_0, \Gamma_1, \dots \Gamma_n=R_\mathcal A
\]
where for $i=1,\dots, n$ $\Gamma_{i}$ is obtained from $\Gamma_{i-1}$
  by a maximal fold. Then the associated markings
  $\overline{\Gamma_{i}}$ are vertices of $FS_N'$. As observed in Remark~\ref{rem:fold}, we have $d(\overline{\Gamma_{i-1}}, \overline{\Gamma_{i}})\le 2$ in
  $FS_N'$. Joining each consecutive pair $\overline{\Gamma_{i-1}},
  \overline{\Gamma_{i}}$ by a a geodesic path of length $\le 2$ in $FS_N'$
  produces the path $g_{x,y}$ from $x$ to $y$ in $FS_N'$.  Note that each $\Gamma_i$ has a base-vertex $v_i$ which is the image of the base-vertex $v_0$ of $\Gamma_0$ under the sequence of folds that takes $\Gamma_0$ to $\Gamma_i$.
   
A crucial feature of the above construction is that every $\Gamma_i$
will have a loop-edge (at the base-vertex $v_i$ of $\Gamma_i$) with label $a_1$. Since the map
$f\from FS_N'\to FB_N$ is $L$--Lipschitz, this implies that
$f(g_{x,y})$ has diameter bounded by some constant $M_0$ independent 
of $x,y$. 
Indeed, Since $\Gamma_i$ has a loop-edge at its base-vertex with
label $a_1$, there exists a free basis $\gamma_1,\dots, \gamma_N$ of
$\pi_1(\Gamma_i, v_i)$ (e.g. coming from a choice of a maximal tree in
$\Gamma_i$, as in Definition~\ref{defn:BT} and Remark~\ref{rem:BT'}) such that $\mu(\gamma_1)=a_1$ and such that
$\mathcal B_i=\{\mu(\gamma_1),\dots ,\mu(\gamma_N)\}$ is a free basis
of $F_N$. Since $a_1\in \mathcal B_i$, we have $d([\mathcal B_i],
[\mathcal A])\le 1$ in $FB_N$ for each $i$. Unpacking the definition of the map
$f$ we see that $d(f(\overline{\Gamma_{i-1}}),
[\mathcal B_i])\le C$ in $FB_N$ for some constant $C\ge 0$. Hence $d(f(\overline{\Gamma_{i-1}}),
[\mathcal A])\le C+1$. Recall that $g_{x,y}$ is a quasi-geodesic in a hyperbolic 
graph $FS_N'$ and hence $g_{x,y}$ is uniformly Hausdorff-close to a geodesic 
$[x,y]$. Since $f$ is  $L$--Lipschitz, it follows that $f([x,y])$ has diameter bounded by some constant $M$ independent of $x,y$. Thus condition (3) of 
Proposition~\ref{prop:image} holds.

Therefore, by  Proposition~\ref{prop:image}, the graph $FB_N$ is Gromov-hyperbolic, and, moreover, for any vertices $x,y$ of $FS_N$, the path $f([x,y])$ is uniformly Hausdorff-close to a geodesic $[f(x), f(y)]$.

Recall that in Proposition~\ref{prop:FB} we constructed an explicit quasi-isometry $h\from  FB_N\to FF_N$. Since $FB_N$ is hyperbolic, it follows that $FF_N$ is Gromov-hyperbolic as well.
Moreover,  the map $\tau\from  FS_N\to FF_N$ from the statement of Theorem~\ref{thm:A}, and the map $h\circ f\from  FS_N\to FF_N$ are bounded distance from each other. This implies that there exists a constant $H>0$ such that for any two vertices $x,y$ of $FS_N$ and any geodesic $[x,y]$ in $FS_N^{(1)}$ the path $\tau([x,y])$ is $H$--Hausdorff close to a geodesic $[\tau(x),\tau(y)]$ in $FF_N^{(1)}$.
\end{proof}

\begin{rem}\label{rem:geod}
The above proof implies a reasonably explicit description of certain reparameterized quasigeodesics in $FB_N$ between two arbitrary vertices of $FB_N$ in terms of Stallings folds. Let $\mathcal A=\{a_1,\dots, a_N\}$  and $\mathcal B=\{b_1,\dots, b_N\}$ be free bases of $F_N$. Let $\Gamma_0$ be an $\mathcal A$--graph corresponding to $\mathcal B$ constructed in a similar way to the way $\Gamma_0$ was constructed in the above proof.  That is, let $\Gamma_0$ be a wedge of $N$
simple loops at a common base-vertex $v_0$, where the $i$-th loop is
labeled by the freely reduced word over $\mathcal A$ that is equal to
$b_i$ in $F_N$. Suppose that $\Gamma_0$ is such that the natural projection $p\from  \Gamma_0\to R_\mathcal A$ is foldable in the sense of Definition~\ref{defn:foldable}. (Note that this assumption does not always hold; however, it may always be ensured after replacing $\mathcal B$ by an equivalent free basis).

Let $\Gamma_0, \Gamma_1, \dots, \Gamma_n=R_\mathcal A$ be $\mathcal A$--graphs such that for $i=1,\dots, n$ $\Gamma_{i}$ is obtained from $\Gamma_{i-1}$
  by a maximal fold. Note that each $\Gamma_i$  has a distinguished base-vertex $v_i$, which is the image of the base-vertex $v_0$ of $\Gamma_0$ under the foldings transforming $\Gamma_0$ to $\Gamma_i$
.

For each $1\le i<n$ choose a maximal subtree $T_i$ in
$\Gamma_i$. Let $\mathcal A_i=\mathcal B(\Gamma_i, T_i)$ be the
associated free basis of $F_N$ (see Remark~\ref{rem:BT'} above for its
detailed description). Put $\mathcal A_0=\mathcal B$
and $\mathcal A_n=\mathcal A$.

It is not hard to check that $d([\mathcal A_i], f(\overline \Gamma_i))\le C$ in $FB_N$ for some constant $C=C(N)>0$ independent of $\mathcal A$, $\mathcal B$.
Since, as noted in the proof of Theorem~\ref{thm:A} above, the sequence $\overline{\Gamma_0}, \dots, \overline{\Gamma_n}$ defines a (reparameterized) uniform quasigeodesic in $FS_N'$, it now follows from the proof of Theorem~\ref{thm:A} that the set $\{[\mathcal A_0], [\mathcal A_1],  \dots, [\mathcal A_n]\}$ is uniformly Hausdorff-close to a geodesic joining $[\mathcal B]$ and $[\mathcal A]$ in $FB_N$.  This fact can also be derived from a careful analysis of the Bestvina-Feighn proof~\cite{BF11} of hyperbolicity of $FF_N$.
\end{rem}

\end{document}